\newtheorem{theorem}{Theorem} [section]
\newtheorem{maintheorem}{Theorem}
\newtheorem{lemma}[theorem]{Lemma}
\newtheorem{proposition}[theorem]{Proposition}
\newtheorem{remark}[theorem]{Remark}
\newtheorem{corollary}[theorem]{Corollary}
\DeclareMathOperator*{\intt}{\int}
\DeclareMathOperator{\MAX}{MAX}
\newcommand{\noi}{\noindent}
\newcommand{\Z}{\mathbb{Z}}
\newcommand{\R}{\mathbb{R}}
\newcommand{\T}{\mathbb{T}}
\newcommand{\al}{\alpha}
\newcommand{\dl}{\delta}
\newcommand{\eps}{\varepsilon}
\newcommand{\g}{\gamma}
\newcommand{\G}{\Gamma}
\newcommand{\ld}{\lambda}
\newcommand{\s}{\sigma}
\newcommand{\ft}{\widehat}
\newcommand{\wt}{\widetilde}
\newcommand{\cj}{\overline}
\newcommand{\dx}{\partial_x}
\newcommand{\dt}{\partial_t}
\newcommand{\I}{\hspace{0.5mm}\text{I}\hspace{0.5mm}}
\newcommand{\II}{\text{I \hspace{-2.8mm} I} }
\newcommand{\jb}[1]
{\langle #1 \rangle}
\begin{document}

\title
[ Periodic Stochastic KdV with Additive Noise]
{\bf Periodic Stochastic Korteweg-de Vries Equation
with the Additive Space-Time White Noise}

\author{Tadahiro Oh}

\address{Tadahiro Oh\\
Department of Mathematics\\
University of Toronto\\
40 St. George St, Rm 6290,
Toronto, ON M5S 2E4, Canada}

\email{oh@math.toronto.edu}

\subjclass[2000]{ 35Q53, 60H15}

\keywords{stochastic KdV;  white noise; local well-posedness}

\begin{abstract}
We prove the local well-posedness of the periodic stochastic Korteweg-de Vries equation 
with the additive space-time white noise.
In order to treat low regularity of the white noise in space, we consider the Cauchy problem in the Besov-type space 
$\ft{b}^s_{p, \infty}(\mathbb{T})$ for $ s= -\frac{1}{2}+$, $p = 2+$ such that $sp < -1$.
In establishing the local well-posedness,
we use a variant of the Bourgain space adapted to $\ft{b}^s_{p, \infty}(\mathbb{T})$
and establish a nonlinear estimate on the second iteration on the integral formulation.
The deterministic part of the nonlinear estimate also yields 
the local well-posedness of the deterministic KdV in $M(\T)$, 
the space of finite Borel measures on $\T$.
\end{abstract}

\maketitle

\tableofcontents

\section{Introduction}

In this paper, we prove the local well-posedness of the periodic stochastic KdV equation (SKdV)
with the additive space-time white noise:
\begin{equation} \label{SKDV}
\begin{cases}
du + (\dx^3 u + u\dx u) dt = dW\\
u(x, 0) = u_0(x) 
\end{cases}
\end{equation}

\noindent
where $u$ is a real-valued function, 
$(x, t) \in \T \times \R^+$  with $ \T = [0, 2\pi)$,
and  $W (t) = \frac{\partial B}{\partial x}$ is a cylindrical Wiener process on $L^2(\T)$.
With $e_n(x) = \frac{1}{\sqrt{2\pi}}e^{inx}$, 
we have $W (t) =  \beta_0(t) e_0 + \sum_{n \ne 0 } \frac{1}{\sqrt{2}} \beta_n(t) e_n(x)$
where $\{ \beta_n\}_{n \geq 0}$ is a family of mutually independent complex-valued Brownian motions
(here we take $\beta_0$ to be real-valued)
in a fixed probability space $(\Omega, \mathcal{F}, P)$
associated with a filtration $\{\mathcal{F}_t\}_{t \geq 0}$
and $\beta_{-n}(t) = \cj{\beta_n}(t)$ for $n \geq 1$.
Note that $\text{Var}(\beta_n(1)) = 2$ for $n \geq 1$.

In \cite{DDT1}, de Bouard-Debussche-Tsutsumi considered
\begin{equation} \label{SKDV1}
\begin{cases}
du + (\dx^3 u + u\dx u) dt = \phi dW\\
u(x, 0) = u_0(x), 
\end{cases}
\end{equation}

\noindent
where $\phi$ is a bounded linear operator in $L^2(\mathbb{T})$.
 They showed that \eqref{SKDV1} is locally well-posed
when $\phi$ is a Hilbert-Schmidt operator from $L^2(\T)$ to $H^s(\T)$
for $s > -\frac{1}{2}$.
See \cite{DDT1} and the references therein
for the previous works in the periodic and nonperiodic settings.

In our present work, we consider the case when $\phi$ is the identity operator on $L^2(\T)$. 
i.e. we take the additive noise to be the space-time white noise $\frac{\partial ^2 B}{\dt\dx}$,
where $B(x, t)$ is a two parameter Brownian motion on $\mathbb{T} \times \mathbb{R}^+$.
Note that  
$\phi$ is a Hilbert-Schmidt operator from $L^2(\T)$ to $H^s(\T)$
for $s < -\frac{1}{2}$ but not for $s \geq -\frac{1}{2}.$ 

\medskip

Suppose that $u$ is the solution to \eqref{SKDV},
or equivalently to \eqref{SKDV1} with $\phi = \text{Id}$, the identity operator on $L^2(\mathbb{T})$.
Let $v_1(x, t) = u(x + \alpha_0 t, t) - \alpha_0$, where $\alpha_0 =$ the mean of $u_0$.
Then, $v_1$ satisfies \eqref{SKDV} with the mean 0 initial condition $u_0 - \alpha_0$.
Now, let $ \mathbb{P}_0$ be the projection onto the spatial frequency 0, and 
$\mathbb{P}_{n \ne 0} = \text{Id} - \mathbb{P}_0$.
Note that $\mathbb{P}_0 W (t) = \beta_0(t) e_0(x) = \frac{1}{\sqrt{2\pi}} \beta_0(t)$.
By letting $v_2 = v_1 - \frac{1}{\sqrt{2\pi}} \beta_0(t)$, we see that 
$u$ satisfies \eqref{SKDV} if and only if $v_2$ satisfies
\begin{equation*} 
\begin{cases}
dv_2 + (\dx^3 v_2 + (v_2 +\frac{1}{\sqrt{2\pi}} \beta_0(t))\dx v_2) dt = \mathbb{P}_{n \ne 0} dW\\
v_2(x, 0) = u_0(x) -\al_0
\end{cases}
\end{equation*}

\noindent
almost surely since $\beta_0(0) = 0 $ a.s.
By setting $v_3(x, t) = v_2(x + c_\omega(t) , t)$ 
with $c_\omega(t) = \int_0^t \frac{1}{\sqrt{2\pi}}  \beta_0(t') dt'$, 
it follows that $v_3$ satisfies
\begin{equation*} 
\begin{cases}
d v_3 + (\dx^3 v_3 + v_3\dx v_3) dt =  d\wt{W}\\
v_3 (x, 0) = u_0(x) - \alpha_0,
\end{cases}
\end{equation*}

\noindent
where $\wt{W}(x, t) = \sum_{n \ne 0} \frac{1}{\sqrt{2}} \beta_n(t) e_n (x + c_\omega(t)) 
= \sum_{n \ne 0} \frac{1}{\sqrt{2}}  \beta_n(t)  e^{in c_\omega(t)}  e_n (x)$.
i.e. $v_3$ solves \eqref{SKDV1} where 
\begin{equation} \label{PHI}
\phi = \text{diag} (\phi_n ; n \ne 0)
\ \ \text{with} \
\phi_n (t) = e^{in c_\omega(t)} \, \text{ and }
\ c_\omega(t) = \int_0^t \tfrac{1}{\sqrt{2\pi}}  \beta_0(t') dt'
\end{equation}

\noi
(with respect to the basis $\{e_n\}_{n \in \mathbb{Z}}$.)
Moreover, note that $v_3$ has the spatial mean 0
(as long as it exists)
since $e_0 \notin \text{Range} (\phi)$.
Therefore, in the remaining of the paper, we concentrate on studying the local well-posedness of \eqref{SKDV1}
with $\phi$ given by \eqref{PHI}
and the mean 0 initial condition $u_0$,
(which implies that $u$ has the spatial mean 0 as long as it exists.)

\medskip

Recall that $u$ is called a (local-in-time) mild solution  to \eqref{SKDV1}
if $u$ satisfies 
\begin{equation} \label{duhamel1}
u(t) =  S(t) u_0 -\tfrac{1}{2}  \int_0^t S(t - t') \dx u^2(t') d t' 
+  \int_0^t S(t-t') \phi(t') dW(t') 
\end{equation} 

\noindent
at least for $t \in [0, T]$ for some $T > 0$, 
where  $S(t) = e^{-t\dx^3}$.

Note that the first two terms in \eqref{duhamel1} also appear in the deterministic KdV theory.
Thus, we briefly review recent well-posedness results of the periodic (deterministic) KdV:
\begin{equation} \label{KDV}
\begin{cases}
u_t + u_{xxx} +  u u_x  = 0 \\ 
u \big|_{t = 0} = u_0,
\end{cases} (x, t) \in \mathbb{T} \times \mathbb{R}.
\end{equation}

\noi
In \cite{BO1}, Bourgain  introduced a new weighted space-time Sobolev space $X^{s, b}$
whose norm is given by
\begin{equation} \label{Xsb}
\| u \|_{X^{s, b}(\mathbb{T} \times \mathbb{R})} = \| \jb{n}^s \jb{\tau - n^3}^b 
\ft{u}(n, \tau) \|_{L^2_{n, \tau}(\mathbb{Z} \times \R)},
\end{equation}

\noindent
where $\jb{ \: \cdot \:} = 1 + |  \cdot  | $. 
He proved the local well-posedness of \eqref{KDV} in $L^2(\mathbb{T})$
via the fixed point argument, 
immediately yielding the global well-posedness in $L^2(\mathbb{T})$
thanks to the conservation of the $L^2$ norm.
Kenig-Ponce-Vega \cite{KPV4} improved Bourgain's result 
and established the local well-posedness in $H^{-\frac{1}{2}}(\T)$
by establishing the bilinear estimate 
\begin{equation} \label{KPVbilinear}
\| \dx(uv) \|_{X^{s, -\frac{1}{2}}} \lesssim \| u \|_{X^{s, \frac{1}{2}}} \| v \|_{X^{s, \frac{1}{2}}}, 
\end{equation}

\noi
for $s \geq -\frac{1}{2}$ under the mean 0 assumption on $u$ and $v$.
Colliander-Keel-Staffilani-Takaoka-Tao \cite{CKSTT4} proved 
the corresponding global well-posedness result via the $I$-method. 

There are also results on \eqref{KDV} which exploit its complete integrability.
In \cite{BO3}, Bourgain proved the global well-posedness of \eqref{KDV}
in the class $M(\T)$ of measures $\mu$, assuming that 
its total variation $\|\mu\|$ is sufficiently small.
His proof is based on the trilinear estimate on the second iteration of 
the integral formulation of \eqref{KDV}, 
assuming an a priori uniform bound on the Fourier coefficients of the solution $u$ of the form 
\begin{equation} \label{BOO}
\sup_{n\in \mathbb{Z}} |\ft{u}(n, t)| < C
\end{equation}

\noi
for all $t\in \R$. 
Then, he established \eqref{BOO} using the complete integrability.
More recently, Kappeler-Topalov \cite{KT} proved the global well-posedness of the KdV in $H^{-1}(\T)$
via the inverse spectral method. 

There are also results on the necessary conditions on the  regularity 
with respect to smoothness or uniform continuity 
of the solution map $: u_0 \in H^s (\mathbb{T}) \to u(t) \in H^s(\mathbb{T})$.
Bourgain \cite{BO3} showed that if the solution map is $C^3$, 
then $s \geq -\frac{1}{2}$.  
Christ-Colliander-Tao \cite{CCT}
proved that if the solution map is uniformly continuous, 
then $s \geq -\frac{1}{2}$.
(Also, see Kenig-Ponce-Vega \cite{KPV5}.) 
These results, in particular, imply that
we can not hope to have a local-in-time solution of KdV via the fixed point argument in $H^s$, $s < -\frac{1}{2}$.
Recall that, for each fixed $t$,  
the space-time white noise $\frac{\partial ^2 B}{\dt\dx}$ lies in $\cap_{s < -\frac{1}{2}}H^s \setminus H^{-\frac{1}{2}}$
almost surely. 
Hence, these results for KdV can not be applied to study the local well-posedness of \eqref{SKDV}.

\medskip

 Now, let us discuss the spaces which capture 
 the regularities of the spatial and space-time white noise. 
 Recently, we proved the invariance of the (spatial) white noise for the (deterministic) KdV in \cite{OH3}
(also see \cite{OHRIMS})
by first establishing the local well-posedness in 
an appropriate Banach space containing the support of the (spatial) white noise.
Define the Besov-type space 
 via the norm
\begin{equation} \label{Besov}
\| f\|_{\ft{b}^s_{p, \infty}} 
:= \| \ft{f}\|_{b^s_{p, \infty}} = \sup_j \| \jb{n}^s \ft{f}(n) \|_{L^p_{|n|\sim 2^j}}
= \sup_j \Big( \sum_{|n| \sim 2^j} \jb{n}^{sp} |\ft{f}(n)|^p \Big)^\frac{1}{p}.
\end{equation}

\noi
In \cite{OH3}, using  the theory of abstract Wiener spaces,
we showed that $\ft{b}^s_{p, \infty}$ contains the full support of the (spatial) white noise for $sp < -1$.
(The statement also holds true for $sp = -1$.)

Let's consider the stochastic convolution $\Phi(t)$ given by
\begin{equation} \label{stoconv}
\Phi(t) = \int_0^t S(t-t') \phi (t')  dW(t'),
\end{equation}

\noi
where $\phi$ is given by \eqref{PHI}.
Define a variant of the $X^{s, b}$ space adjusted to $\ft{b}^s_{p, \infty} (\mathbb{T})$.
Let $X^{s, b}_{p, q}$ be the completion of the Schwartz class $\mathcal{S}(\mathbb{T} \times \mathbb{R})$ under the norm
\begin{equation} \label{XSBP}
 \| u \|_{X^{s, b}_{p, q}} 
 =  \|\jb{n}^s \jb{\tau - n^3}^b \ft{u}(n, \tau)\|_{b^0_{p, \infty} L^q_\tau}.
\end{equation}

\noi
Note that $X^{s, b}_{p, q}$ defined in \eqref{XSBP} is the space of functions $u$ such that 
$S(-t)u(\cdot, t) \in (\ft{b}^{s}_{p, \infty})_x  (\mathcal{F}L^{b, q})_t$,
where $\mathcal{F}L^{b, q}$ is defined via the norm 
\begin{equation}\label{FLBP}
 \|f\|_{\mathcal{F}L^{b, q}} :=  \| \jb{\tau}^b \ft{f}(\tau)\|_{L^q}.
 \end{equation}

\noi
In \cite{OH3}, we also showed that the local-in-time white noise is supported on 
$\mathcal{F}L^{c, q}$ for $ c q < -1 $.
This implies that the Brownian motion belongs locally in time to 
$\mathcal{F}L^{b, q}$ for $ (b-1) q < -1$.
Hence, with $b <  \frac{1}{2}$ and $q = 2$, 
we see that the local-in-time 
stochastic convolution $\eta(t) \Phi(t)$ lies in $X^{s, b}_{p, q}$ almost surely,
with $sp < -1$, $b  <  \frac{1}{2}$ and $q = 2$, 
where $\eta(t)$ is a smooth cutoff supported on $[-1, 2]$ with $\eta(t) \equiv 1$ on $[0, 1]$.

\medskip

The argument by de Bouard-Debussche-Tsutsumi \cite{DDT1} is based on the result by Roynette \cite{ROY}
on the endpoint regularity of the Brownian motion.
i.e. the Brownian motion $\beta(t)$ belongs to the Besov space $B^{1/2}_{p, q}$
if and only if $ q= \infty$ (with $1\leq p < \infty$.)
Then, they proved a variant of the bilinear estimate \eqref{KPVbilinear}
by Kenig-Ponce-Vega
adjusted to their Besov space setting,
establishing the local well-posedness via the fixed point theorem.
Note that the use of a variant of the bilinear estimate  \eqref{KPVbilinear}
required a slight regularization of the noise in space via $\phi$
so that the smoothed noise has the spatial regularity $s >-\frac{1}{2}$.
Thus, they could not treat the space-time white noise, i.e. $\phi =$ Id.

Our result is based on two observations.
The first one is that our $l^p_n$-based function spaces $\ft{b}^s_{p, \infty}$ in \eqref{Besov} and 
$X^{s, b}_{p, q}$ in \eqref{XSBP} capture the regularity of the spatial and space-time white noise
for $sp < -1$, $b  <  \frac{1}{2}$ and $q = 2$.
The second is that we can indeed carry out Bourgain's argument in \cite{BO3}, 
a nonlinear estimate on the second iteration, 
{\it without} assuming the a priori bound \eqref{BOO},
if we take the initial data $u_0 \in \ft{b}^s_{p, \infty}$ for  $s >-\frac{1}{2}$
with $p > 2$.
Then, we construct a solution $u$ as a strong limit of the smooth solutions $u^N$
(with smooth $u_0^N$ and $\phi^N$) of \eqref{SKDV1}.
Note that our nonlinear estimate on the second iteration in Section 5 depends 
on the stochastic term,
whereas the bilinear estimate in \cite{DDT1} is entirely deterministic.
 
Finally, we present our main results.
\begin{maintheorem} \label{THM1}
Let $\phi$ be as in \eqref{PHI}
and $p = 2+$.
Then, let $ s= -\frac{1}{2} + \dl$ with $\frac{p-2}{4p} < \dl < \frac{p-2}{2p}$. 
 i.e.  $sp < -1$.
Also, let $u_0$ be $\mathcal{F}_0$-measurable 
such that it has mean 0 and belongs to $\ft{b}^s_{p, \infty}(\mathbb{T})$ almost surely.
Then, there exists a stopping time $T_\omega >0$
and a unique process $u \in C([0, T_\omega]; \ft{b}^s_{p, \infty}(\mathbb{T}))$
satisfying \eqref{SKDV1} on $[0, T_\omega]$ almost surely.
\end{maintheorem}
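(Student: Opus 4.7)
My plan is to solve the Duhamel equation
\begin{equation*}
u(t) = S(t) u_0 + \Phi(t) - \tfrac{1}{2}\int_0^t S(t - t')\dx u^2(t')\, dt'
\end{equation*}
by a contraction mapping argument in a small ball of the space $X^{s, b}_{p, 2}$ from \eqref{XSBP}, localized to a random time interval $[0, T_\omega]$ via a smooth cutoff $\eta(t/T)$. Because the bilinear estimate \eqref{KPVbilinear} requires $s \geq -\tfrac{1}{2}$, and the deterministic KdV solution map is known to fail uniform continuity below $-\tfrac{1}{2}$, one cannot close such a contraction directly on the Duhamel map in $X^{s, b}_{p, 2}$ for our $s < -\tfrac12$. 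Following Bourgain's idea from \cite{BO3}, I would substitute the Duhamel formula into itself once to obtain the ``second iteration'' form
\begin{equation*}
u = z - \tfrac{1}{2}\int_0^t S(t - t') \dx z^2\, dt' - \int_0^t S(t - t') \dx\bigl(z \cdot N(u)\bigr)\, dt' - \tfrac{1}{2}\int_0^t S(t - t')\dx N(u)^2\, dt',
\end{equation*}
where $z(t) = S(t) u_0 + \Phi(t)$ and $N(u) = -\tfrac12 \int_0^t S(t - t')\dx u^2\, dt'$. The extra phase $e^{3 i t' n n_1 n_2}$ arising from the $t'$-convolution on $\dx z^2$ supplies the modulation that compensates the loss of regularity below $-\tfrac12$.

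The fixed point is sought in a small ball of $X^{s, b}_{p, 2}$ with $b < \tfrac12$. The linear evolution $S(t) u_0$ obeys the standard energy estimate in this space, while the introduction already establishes that $\eta(t) \Phi(t)$ lies in $X^{s, b}_{p, 2}$ almost surely for $sp < -1$, $b < \tfrac12$, $q = 2$; its $X^{s, b}_{p, 2}$-norm, which is the sole source of randomness in the reformulation, is to be controlled on events of large probability via moment bounds for $\{\beta_n\}$. The three nonlinear pieces of the second-iterated equation are then bounded using the multilinear estimates of Section 5, schematically of the form
\begin{equation*}
\Big\| \int_0^t S(t-t')\dx(w_1 w_2)\, dt' \Big\|_{X^{s, b}_{p, 2}} \lesssim T^{\theta}\, \|w_1\|_{X^{s, b}_{p, 2}}\|w_2\|_{X^{s, b}_{p, 2}}
\end{equation*}
for some $\theta > 0$, but crucially \emph{only after} the second-iteration structure has been exploited on the first-order piece $\dx z^2$. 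The conditions $s > -\tfrac12$, $sp < -1$, and $p > 2$ provide exactly the gap needed to carry out the frequency summation in the $\ell^p$-based space.

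Given these estimates, I would pick the stopping time $T_\omega > 0$ so that the combined size of $S(\cdot)u_0$ and $\eta \Phi$ in $X^{s, b}_{p, 2}$ makes the map a strict contraction on a small ball; the fixed point $u$ is then the desired local-in-time mild solution, and uniqueness in this class is automatic. Continuity in time with values in $\ft{b}^s_{p, \infty}(\T)$ does not follow from a generic embedding of $X^{s,b}_{p, 2}$ (since $b < \tfrac12$ and $q = 2$) and would have to be read off the three Duhamel terms separately, using time-continuity of $\Phi$ as a stochastic process and the extra regularity supplied by the multilinear estimates for the nonlinear pieces. The main obstacle I expect is the nonlinear estimate of Section 5 on the cross term $\dx(z \cdot N(u))$: here the deterministic multilinear analysis on the KdV resonance relation must be coupled with probabilistic control of $\Phi$ through $z$, a mixing of deterministic and stochastic bounds at the level of the nonlinear estimate which is where this proof genuinely departs from both \cite{BO3} (no noise) and \cite{DDT1} (no second iteration).
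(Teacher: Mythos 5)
Your high-level ingredients match the paper's: the $\ell^p$-based spaces $\ft{b}^s_{p,\infty}$ and $X^{s,b}_{p,2}$ with $s=-\frac12+\dl$ and $sp<-1$, the second iteration in the spirit of \cite{BO3} to beat the failure of \eqref{KPVbilinear} below $H^{-1/2}$, and the observation that the stochastic convolution must enter the nonlinear estimate itself (the paper's Estimate on (ii), i.e.\ $\mathcal{N}_1(\eta\Phi,u)$, handled via Ito isometry and Lemma \ref{LEM:closetocurve}). But the contraction-mapping framing has a genuine gap. A fixed point of the second-iterated map
\[
\G u = z + N\big(z+N(u)\big), \qquad N(u)=-\tfrac12\textstyle\int_0^t S(t-t')\dx u^2\,dt',
\]
is not automatically a mild solution: from $u=\G u$ one only gets $u = z+N(w)$ with $w=z+N(u)$, and identifying $u$ with $w$ would require $N$ to be Lipschitz with small constant on the ball --- which is exactly the bilinear estimate that fails here and is the reason for iterating in the first place. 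Moreover, the substitution \eqref{eq:u_1} is legitimate only on the region $A_1$ where $\s_1=\MAX\gtrsim\jb{nn_1n_2}$, so that the linear part and $\mathcal{M}_1,\mathcal{M}_3$ (supported on $\s_1\sim1$) drop out and the factor $\s_1^{-1}$ from $\mathcal{M}_2$ becomes available; your global substitution of $u=z+N(u)$ into $u^2$ does not produce that gain on $A_0\cup A_2$ and obscures where the cross term actually becomes estimable.

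The paper circumvents this by proving the second-iteration bound as an \emph{a priori estimate on actual solutions}: it takes the smooth global solutions $u^N$ of \cite{DDT1} with truncated data $u_0^N$, $\phi^N$, derives the self-referential inequality \eqref{XYZ} for $I^N_{-\al,1-\al}$ (absorbing the term $T^{\theta_2}\|u^N\|_{X^{-\al,\al,T}_{p,2}}I^N_{-\al,1-\al}$ and using continuity of $T\mapsto\|u^N\|_{X^{-\al,\al,T}_{p,2}}$ to close a bootstrap up to the stopping time $T_\omega$), shows the sequence is Cauchy in $X^{-\al,\al,T_\omega}_{p,2}$ uniformly on sets $\Omega_\eps$ obtained from Egoroff's theorem, and then verifies term by term (steps (A), (B), (C)) that the limit solves \eqref{duhamel1}. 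Uniqueness and continuity in $t$ are proved separately afterwards, the latter essentially along the lines you anticipate. To keep a fixed-point formulation you would have to either prove convergence of the full Picard iteration of the original map using the second-iteration bounds, or supply an independent argument identifying the fixed point of the iterated map with a solution of \eqref{duhamel1}; as written, that identification is missing.
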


As a corollary, we obtain the following:
\begin{maintheorem} \label{THM2}
The stochastic KdV \eqref{SKDV} with the additive space-time white noise 
is locally well-posed almost surely
(with the prescribed mean on $u_0$.)
\end{maintheorem}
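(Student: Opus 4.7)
The plan is to derive Theorem \ref{THM2} from Theorem \ref{THM1} by inverting the chain of transformations $u \mapsto v_1 \mapsto v_2 \mapsto v_3$ already laid out in the introduction. Given an $\mathcal{F}_0$-measurable datum $u_0 \in \ft{b}^s_{p, \infty}(\T)$ with prescribed mean $\al_0$, I first set $\wt{u}_0 := u_0 - \al_0$, which is $\mathcal{F}_0$-measurable, has mean $0$, and still lies in $\ft{b}^s_{p, \infty}(\T)$ a.s.\ (subtracting a constant alters only the zeroth Fourier mode, while \eqref{Besov} controls all others). Theorem \ref{THM1} then produces a stopping time $T_\omega > 0$ and a unique solution $v_3 \in C([0, T_\omega]; \ft{b}^s_{p, \infty}(\T))$ of \eqref{SKDV1} with $\phi$ given by \eqref{PHI} and initial datum $\wt{u}_0$.

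Next, I would run the transformations in reverse, defining
\[
u(x, t) := v_3\bigl(x - \al_0 t - c_\omega(t),\, t\bigr) \,+\, \al_0 \,+\, \tfrac{1}{\sqrt{2\pi}} \beta_0(t).
\]
The two operations here---spatial translation by the adapted path $\al_0 t + c_\omega(t)$, and addition of the spatially constant process $\al_0 + \tfrac{1}{\sqrt{2\pi}} \beta_0(t)$---both preserve continuity into $\ft{b}^s_{p, \infty}(\T)$: spatial translation acts on the Fourier side by a unimodular phase and hence leaves the norm \eqref{Besov} invariant, while adding a spatial constant only modifies the zeroth mode, which is finite a.s. Consequently $u \in C([0, T_\omega]; \ft{b}^s_{p, \infty}(\T))$ a.s., and the spatial mean of $u(t)$ equals $\al_0 + \tfrac{1}{\sqrt{2\pi}} \beta_0(t)$, matching the evolution of the mean forced by $\mathbb{P}_0 W$.

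It remains to verify that $u$ so defined satisfies the mild formulation \eqref{duhamel1} for \eqref{SKDV} with $\phi = \text{Id}$, and that it is unique. Uniqueness is automatic: two solutions of \eqref{SKDV} with the same $u_0$ would yield, after the same transformation, two solutions of \eqref{SKDV1} with the same $\wt{u}_0$ and $\phi$ as in \eqref{PHI}, so they must coincide by Theorem \ref{THM1}. For existence, one runs the It\^o computation of the introduction backwards: because $c_\omega$ has finite variation (it is the time-integral of $\beta_0/\sqrt{2\pi}$), the random spatial shift is a classical pathwise change of variables and contributes no It\^o correction, while $\beta_0(t) e_0 = \mathbb{P}_0 W(t)$ recombines with the phase-modulated tail $\sum_{n \ne 0} \phi_n \beta_n e_n$ to reproduce $dW$. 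The main point of care I foresee is precisely this recombination: checking that the Duhamel stochastic integral $\int_0^t S(t - t') \phi(t') dW(t')$ for $v_3$, once transported back through the random shift and augmented by the zero-mode contribution $\tfrac{1}{\sqrt{2\pi}} \beta_0(t)$, matches $\int_0^t S(t - t') dW(t')$ for the original cylindrical Wiener process. Once this pathwise bookkeeping is verified, Theorem \ref{THM2} follows immediately.
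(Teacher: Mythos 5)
Your proposal is correct and follows essentially the same route the paper intends: Theorem \ref{THM2} is obtained by applying Theorem \ref{THM1} to the reduced problem \eqref{SKDV1} with $\phi$ as in \eqref{PHI} and mean-zero datum $u_0-\al_0$, and then inverting the chain $u\mapsto v_1\mapsto v_2\mapsto v_3$ from the introduction, exactly as you do with $u(x,t)=v_3(x-\al_0 t-c_\omega(t),t)+\al_0+\tfrac{1}{\sqrt{2\pi}}\beta_0(t)$. Your supporting observations (translation invariance of the $\ft{b}^s_{p,\infty}$ norm, finite variation of $c_\omega$ so the shift is a pathwise change of variables, and transfer of uniqueness through the bijective transformation) are the right ones.
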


\begin{remark}\rm
Our argument provides an answer to the question posed by Bourgain 
in \cite[Remark on p.120]{BO3}, 
at least in the local-in-time setting.
The deterministic part of the nonlinear estimate in Section 5
can be used to establish the local well-posedness of \eqref{KDV}
for  a finite Borel measure  $u_0 = \mu \in M(\T)$
with $\|\mu\| < \infty$ {\it without} the complete integrability or the smallness assumption on $\mu$.
Note that $\mu \in \ft{b}^s_{p, \infty}$
for $sp \leq -1$ since $\sup_n |\ft{\mu}(n)| <\|\mu\|< \infty.$
Hence, it can be used to  study the Cauchy problem on $M(\T)$ for non-integrable KdV-variants.
Also, see \cite{OHRIMS}.

\end{remark}

\begin{remark} \rm
Let $\mathcal{F} L^{s, p} (\mathbb{T})$ be the space of functions on $\mathbb{T}$
defined via the norm $\|f\|_{\mathcal{F} L^{s, p}} = \|\jb{n}^s \ft{f}(n)\|_{L^p_n}$. 
Recall from  \cite{OH3} that $\mathcal{F} L^{s, p} (\mathbb{T})$ contains the  support
of the (spatial) white noise when $sp < -1$.
Then, Theorems \ref{THM1} and \ref{THM2} can also be established in 
$\mathcal{F} L^{s, p} (\mathbb{T})$ for $s = -\frac{1}{2}+$, $p = 2+$ with $sp < -1$.
The modification is straightforward once we note that 
$\|f\|_{\mathcal{F} L^{s-\eps, p}} \lesssim  \|f\|_{\ft{b}^s_{p, \infty}}$ for any $\eps > 0$, 
and thus we omit the details.
\end{remark}

This paper is organized as follows:
In Section 2, we introduce some notations.
In Section 3, we introduce function spaces along with their embeddings 
and state deterministic linear estimates from \cite{BO1} and \cite{OH3}.
In Section 4, we study some basic properties of the stochastic convolution.
In Section 5, we prove Theorem \ref{THM1}
by establishing the nonlinear estimate on the second iteration 
of the integral formulation \eqref{duhamel1}.

\smallskip

\noindent
{\bf Acknowledgments:} 
The author would like to thank Prof. Jeremy Quastel 
and Prof. Catherine Sulem
for suggesting this problem.

\section{Notation}

In the periodic setting on $\T$, the spatial Fourier domain is $\Z$.
Let $dn$ be the normalized counting measure on $\Z$. 
We say $f \in L^p(\Z)$, $1 \leq p < \infty$, if
\[ \| f \|_{L^p(\mathbb{Z})} = \bigg( \int_{\mathbb{Z}} |f(n)|^p dn \bigg)^\frac{1}{p}  
:= \bigg( \frac{1}{2\pi} \sum_{n \in \mathbb{Z}} |f(n)|^p \bigg)^\frac{1}{p} < \infty.\]

\noindent
If $ p = \infty$, we have the obvious definition involving the supremum.
We often drop $2\pi$ for simplicity.
If a function depends on both $x$ and $t$, we use ${}^{\wedge_x}$ 
(and ${}^{\wedge_t}$) to denote the spatial (and temporal) Fourier transform, respectively.
However, when there is no confusion, we simply use ${}^\wedge$ to denote the spatial Fourier transform,
the temporal Fourier transform, and  the space-time Fourier transform, depending on the context.

For a Banach space $X \subset \mathcal{S}'(\mathbb{T} \times \mathbb{R})$, 
we use $\ft{X}$ to denote the space of the Fourier transforms of the functions in $X$,
which is a Banach space with the norm $\|f\|_{\ft{X}} = \| \mathcal{F}^{-1}_{n, \tau} f\|_{X}$,
where $\mathcal{F}^{-1}$ denotes the inverse Fourier transform (in $n$ and $\tau$.)
Also, for a space $Y$ of functions on $\mathbb{Z}$, 
we use $\ft{Y}$ to denote the space of the inverse Fourier transforms of the functions in $Y$
 with the norm $\|f\|_{\ft{Y}} = \| \mathcal{F} f\|_{Y}$.
Now, define $\ft{b}^s_{p, q}(\mathbb{T})$ by the norm 
\begin{align} \label{Besov2}
\| f\|_{\ft{b}^s_{p, q}(\mathbb{T})} 
= \| \ft{f}\|_{b^s_{p, q}(\mathbb{Z})} 
: = \big\| \| \jb{n}^s \ft{f}(n) \|_{L^p_{|n|\sim 2^j}} \big\|_{l^q_j} 
= \Big( \sum_{j = 0}^\infty\Big( \sum_{|n| \sim 2^j} \jb{n}^{sp} |\ft{f}(n)|^p 
 \Big)^\frac{q}{p} \Big)^\frac{1}{q}
\end{align}

\noi 
for $ q < \infty$ and by \eqref{Besov} when $q = \infty$.

Throughout the paper, $\eta(t)$ denotes a smooth cutoff supported on $[-1, 2]$ with $\eta(t) \equiv 1$ on $[0, 1]$,
and let $\eta_{_T}(t) = \eta(T^{-1}t)$.
We use $c,$ $ C$ to denote various constants, usually depending only on $s$, $p $,  and $\delta$.
If a constant depends on other quantities, we  make it explicit.
We use $A\lesssim B$ to denote an estimate of the form $A\leq CB$.
Similarly, we use $A\sim B$ to denote $A\lesssim B$ and $B\lesssim A$
and use $A\ll B$ when there is no general constant $C$ such that $B \leq CA$.
We also use $a+$ (and $a-$) to denote $a + \eps$ (and $a - \eps$), respectively,  
for arbitrarily small $\eps \ll 1$.

\section{Function Spaces and Basic  Embeddings}

First, let $X^{s, b}$ denote the usual periodic Bourgain space defined in \eqref{Xsb}.
We often use the shorthand notation $\|\cdot\|_{s, b}$ to denote the $X^{s, b}$ norm.
Now, define $X^{s, b}_{p, q}$, the Bourgain space adapted to $\ft{b}^s_{p, \infty}$, 
 to be the completion of the Schwartz functions on $\T\times \R$
with respect to the norm given by
\begin{equation} \label{XSBPQ}
 \| u \|_{X^{s, b}_{p, q}} 
 =  \|\jb{n}^s \jb{\tau - n^3}^b \ft{u}(n, \tau)\|_{b^0_{p, \infty} L^q_\tau}
 = \sup_j \|\jb{n}^s  \jb{\tau - n^3}^b \ft{u}(n, \tau)
\|_{L^p_{|n| \sim 2^j} L^q_\tau}.
\end{equation}

\noi
In the following, we take $p = 2+$ and $s = -\frac{1}{2}+ = -\frac{1}{2}+\dl$  
with $\dl < \frac{p-2}{2p}$ (and $\dl > \frac{p-2}{4p}$)
such that $sp < -1$.
Lastly, given $T> 0$, we define $X^{s, b, T}_{p, q}$ as a restriction of $X^{s, b}_{p, q}$ on $[0, T]$
by
\[ \|u\|_{X^{s, b, T}_{p, q}} 
=  \|u\|_{X^{s, b}_{p, q}[0, T]} 
= \inf \big\{ \|\wt{u} \|_{X^{s, b}_{p, q}}: {\wt{u}|_{[0, T]} = u}\big\}.\]

\noi
We define the local-in-time versions of the other function spaces analogously.

Now, we discuss the basic embeddings.
For $p \geq 2$, we have $\| a_n\|_{L^p_n} \leq \| a_n\|_{L^2_n}$.
Thus, we have $\|f\|_{\ft{b}^s_{p, \infty}} \leq \|f\|_{H^{s}}$, and thus
\begin{equation}\label{EMBED1}
\|u\|_{X^{s, b}_{p, 2}} \leq \|u\|_{X^{s, b}}.
\end{equation}

\noi
By H\"older inequality, we have
\begin{align} \label{EMBED2}
\|f\|_{H^{-\frac{1}{2}-\dl}} 
& = \Big(\sum_j (2^j)^{0-}
 \|\jb{n}^{-\frac{1}{2}-\dl+} \ft{f}(n) \|_{|n|\sim 2^j}^2  \Big)^\frac{1}{2} \notag\\
& \leq \sup_j \| \jb{n}^{-2\dl+} \|_{L^\frac{2p}{p-2}} \|\jb{n}^{-\frac{1}{2} + \dl} \ft{f}(n) \|_{L^p_n}
\leq \| f \|_{\ft{b}^s_{p, \infty}} 
\end{align}

\noi
for $s = -\frac{1}{2} +\dl$ with $\dl > \frac{p-2}{4p}$.
Hence, for $s = - \frac{1}{2} +\dl$ with $\dl > \frac{p-2}{4p}$, 
we have
\begin{equation} \label{EMBED3}
\|u\|_{X^{-\frac{1}{2}-\dl, b}}\lesssim \|u\|_{X_{p, 2}^{s, b}}.
\end{equation}


Now, we briefly go over the linear  estimates.
Let $S(t) = e^{-t \dx^3}$ and $T\leq 1$ in the following.
We first present the homogeneous and nonhomogeneous linear estimates.
See \cite{BO1}, \cite{KPV3}, \cite{OH3} for  details of the proofs.

\begin{lemma} \label{LEM:linear1}
For any $s \in \mathbb{R}$ and $b < \frac{1}{2}$, we have 
$\|  S(t) u_0\|_{X^{s, b, T}_{p, 2}} \lesssim T^{\frac{1}{2}-b}\|u_0\|_{\ft{b}^s_{p, \infty}}$.
\end{lemma}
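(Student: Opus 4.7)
The plan is to produce an explicit extension of $S(t)u_0$ from $[0,T]$ to all of $\R$ whose $X^{s,b}_{p,2}$-norm can be computed by direct Fourier analysis. The natural candidate is $\wt{u}(x,t) := \eta_{_T}(t)\,S(t)u_0(x)$, which agrees with $S(t)u_0$ on $[0,T]$ because $\eta_{_T}\equiv 1$ there. By definition of the restriction norm, it suffices to bound $\|\wt{u}\|_{X^{s,b}_{p,2}}$ by $T^{\frac12-b}\|u_0\|_{\ft{b}^s_{p,\infty}}$.

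First I would compute the space-time Fourier transform, obtaining
\[
\ft{\wt{u}}(n,\tau) = \ft{u}_0(n)\,\ft{\eta}_{_T}(\tau-n^3) = T\,\ft{u}_0(n)\,\ft{\eta}\bigl(T(\tau-n^3)\bigr).
\]
Next, for each fixed $n$, I would reduce the estimate to a one-variable computation: the change of variables $\sigma = T(\tau-n^3)$ gives
\[
\bigl\|\jb{\tau-n^3}^{b}\,T\ft{\eta}(T(\tau-n^3))\bigr\|_{L^2_\tau} = T^{1/2}\,\bigl\|(1+|\sigma|/T)^{b}\,\ft{\eta}(\sigma)\bigr\|_{L^2_\sigma}.
\]
Since $T\le 1$, one has $(1+|\sigma|/T)^{b}\le T^{-b}\jb{\sigma}^{b}$ when $b\ge 0$, and the Schwartz decay of $\ft{\eta}$ makes $\|\jb{\sigma}^{b}\ft{\eta}(\sigma)\|_{L^2_\sigma}$ finite; when $b<0$, the trivial bound $(1+|\sigma|/T)^{b}\le 1$ together with $T^{1/2}\le T^{1/2-b}$ for $T\le 1$ gives the same conclusion. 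In all cases the $\tau$-factor is controlled by $CT^{1/2-b}$.

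I would then assemble the pieces. Because $S(t)$ is a Fourier multiplier, the spatial Fourier factor $\ft{u}_0(n)$ sits outside the $L^2_\tau$-norm, and the block-$L^p$ norm in $n$ simply factors through. Concretely,
\[
\|\wt{u}\|_{X^{s,b}_{p,2}} = \sup_j\bigl\|\jb{n}^s\ft{u}_0(n)\cdot\|\jb{\tau-n^3}^{b} T\ft{\eta}(T(\tau-n^3))\|_{L^2_\tau}\bigr\|_{L^p_{|n|\sim 2^j}}\lesssim T^{1/2-b}\,\sup_j\|\jb{n}^s \ft{u}_0(n)\|_{L^p_{|n|\sim 2^j}},
\]
and the right-hand side is exactly $T^{1/2-b}\|u_0\|_{\ft{b}^s_{p,\infty}}$ by \eqref{Besov}.

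There is no serious obstacle here: this is precisely the classical linear Bourgain estimate $\|\eta_{_T}(t)S(t)u_0\|_{X^{s,b}}\lesssim T^{1/2-b}\|u_0\|_{H^s}$, transplanted to the $\ft{b}^s_{p,\infty}$-setting. The only point to notice is the decoupling just mentioned, which is what allows the $L^p_n$-replacement of $L^2_n$ to cost nothing. The condition $b<\tfrac12$ is not needed for convergence — it is the condition under which $T^{1/2-b}$ furnishes a positive power of $T$, which is what will eventually be needed to close the contraction mapping in Section 5.
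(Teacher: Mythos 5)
Your approach is the standard one, and it is what the paper itself implicitly relies on (the lemma is stated with a pointer to \cite{BO1}, \cite{KPV3}, \cite{OH3} rather than proved): extend by $\eta_{_T}(t)S(t)u_0$, observe that the modulation variable decouples from $n$ so that the block-$L^p_n$ structure passes through for free, and do the one-variable scaling computation in $\tau$. The computation for $0\le b<\tfrac12$ is correct and gives the clean power $T^{\frac12-b}$.

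The one flawed step is your dispatch of the case $b<0$. For $T\le 1$ and $b<0$ one has $\tfrac12-b>\tfrac12$ and hence $T^{\frac12-b}\le T^{\frac12}$, \emph{not} the inequality $T^{1/2}\le T^{1/2-b}$ you invoke; since the claimed bound $T^{\frac12-b}$ is the \emph{smaller} quantity, the trivial estimate $(1+|\sigma|/T)^{b}\le 1$ only yields the weaker power $T^{1/2}$ and does not prove the lemma in that range. To recover $T^{\frac12-b}$ for $-\tfrac12<b<0$ one should instead split the $\sigma$-integral at $|\sigma|=T$: on $|\sigma|\le T$ use $(1+|\sigma|/T)^{2b}\le 1$ to get a contribution $\lesssim T$, and on $|\sigma|>T$ use $(1+|\sigma|/T)^{2b}\le (|\sigma|/T)^{2b}=T^{-2b}|\sigma|^{2b}$ with $|\sigma|^{2b}|\ft{\eta}(\sigma)|^2$ integrable (this needs $2b>-1$), giving $\lesssim T^{-2b}$; together these yield $T^{1/2}(T+T^{-2b})^{1/2}\sim T^{\frac12-b}$. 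For $b\le -\tfrac12$ this extension does not produce the stated power at all. Since the paper only ever applies the lemma with $b\ge 0$ (indeed $b$ near $\tfrac12$), the slip is harmless in context, but as written your case analysis does not cover the full range $b<\tfrac12$ claimed in the statement.
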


\begin{lemma} \label{LEM:linear2}
For any $s \in \mathbb{R}$ and $b \leq \frac{1}{2}$, we have 
\begin{align*} 
 \bigg\|  \int_0^t S(t-t') F(x, t') dt'\bigg\|_{X^{s, b, T}_{p, 2}} 
\lesssim \| F \|_{X^{s, b-1}_{p, 2}} + \| F \|_{X^{s, -1}_{p, 1}}.
\end{align*}

\noi
Also, we have 
$ \big\|  \int_0^t S(t-t') F(x, t') dt'\big\|_{X^{s, b, T}_{p, 2}} 
\lesssim \| F \|_{X^{s, b-1}_{p, 2}}$
for $b > \frac{1}{2}$.
\end{lemma}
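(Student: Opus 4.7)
The plan is to reduce the estimate to a one-dimensional temporal bound performed at each fixed spatial frequency $n$, and then observe that the outer $b^0_{p,\infty}$-structure in $n$ commutes with those pointwise-in-$n$ estimates so that the proof transfers verbatim from the classical $X^{s,b}$ case. First, insert a smooth cutoff $\eta_T(t)$ and conjugate by the unitary group. Using the definition of $X^{s,b}_{p,q}$ in Section 3 we have the isometric identity
\[\|u\|_{X^{s,b}_{p,2}} = \|S(-t)u\|_{(\ft{b}^s_{p,\infty})_x(\mathcal{F}L^{b,2})_t};\]
combining this with $S(-t)\int_0^t S(t-t')F(t')dt' = \int_0^t f(x,t')dt'$, where $f(x,t) = S(-t)F(x,t)$, reduces matters to proving, uniformly in $n$,
\[\bigl\|\jb{\tau}^b \ft{\eta_T G_n}(\tau)\bigr\|_{L^2_\tau} \lesssim \bigl\|\jb{\tau}^{b-1}\ft{f_n}(\tau)\bigr\|_{L^2_\tau} + \bigl\|\jb{\tau}^{-1}\ft{f_n}(\tau)\bigr\|_{L^1_\tau},\]
where $f_n(t) = \ft{f}(n,t)$ and $G_n(t) = \int_0^t f_n(t')dt'$. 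Under the change of variables $\tau \mapsto \tau - n^3$ the right-hand norms, after applying $\jb{n}^s$ and the $b^0_{p,\infty}$-norm in $n$, are exactly $\|F\|_{X^{s,b-1}_{p,2}}$ and $\|F\|_{X^{s,-1}_{p,1}}$, so this one-dimensional bound suffices.

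The one-dimensional estimate is the standard Bourgain/Kenig--Ponce--Vega decomposition of the Duhamel integral. Writing $f_n(t) = \int e^{it\tau}\ft{f_n}(\tau)d\tau$, split
\[\eta_T(t)G_n(t) = \eta_T(t)\!\int_{|\tau|\leq 1}\!\frac{e^{it\tau}-1}{i\tau}\ft{f_n}(\tau)d\tau + \eta_T(t)\!\int_{|\tau|>1}\!\frac{e^{it\tau}}{i\tau}\ft{f_n}(\tau)d\tau - \eta_T(t)\!\int_{|\tau|>1}\!\frac{\ft{f_n}(\tau)}{i\tau}d\tau.\]
On $|\tau|\leq 1$, Taylor-expand $(e^{it\tau}-1)/(i\tau) = \sum_{k\geq 1}i^{k-1}t^k\tau^{k-1}/k!$ and exchange sum and integral; each resulting term is $\eta_T(t)t^k$ against a uniformly bounded moment of $\ft{f_n}\mathbf{1}_{|\tau|\leq 1}$, whose temporal Fourier transform is a Schwartz function in $\tau$ summable in $k$, and the total is controlled by $\|\ft{f_n}\|_{L^1(|\tau|\leq 1)}$, dominated by either right-hand norm. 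The middle piece, after multiplication by $\eta_T$, has temporal Fourier transform $\ft{\eta_T}\ast\bigl(\mathbf{1}_{|\tau|>1}\ft{f_n}(\tau)/(i\tau)\bigr)$; Young's inequality together with the rapid decay of $\ft{\eta_T}$ absorb the weight $\jb{\tau}^b$ and yield an $L^2_\tau$ bound by $\|\ft{f_n}(\tau)/\tau\|_{L^2(|\tau|>1)}\lesssim\|\jb{\tau}^{b-1}\ft{f_n}\|_{L^2_\tau}$. The last piece is $\eta_T(t)$ times the constant $\alpha_n := \int_{|\tau|>1}\ft{f_n}(\tau)/(i\tau)\,d\tau$, whose $\mathcal{F}L^{b,2}_t$ norm is $|\alpha_n|\cdot\|\jb{\tau}^b\ft{\eta_T}\|_{L^2_\tau}\lesssim\|\jb{\tau}^{-1}\ft{f_n}\|_{L^1_\tau}$; this is exactly where the $\|F\|_{X^{s,-1}_{p,1}}$ term is forced to appear.

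For $b > \tfrac{1}{2}$, Cauchy--Schwarz absorbs the $L^1$ piece,
\[\|\jb{\tau}^{-1}\ft{f_n}\|_{L^1_\tau}\leq\|\jb{\tau}^{-b}\|_{L^2_\tau}\|\jb{\tau}^{b-1}\ft{f_n}\|_{L^2_\tau}\lesssim\|\jb{\tau}^{b-1}\ft{f_n}\|_{L^2_\tau},\]
since $2b > 1$, which yields the second assertion. The main point that really requires checking is the $n$-uniformity of all the constants in the one-dimensional estimate above; but the weight $\jb{\tau-n^3}$ is turned into $\jb{\tau}$ by the Galilean shift $\tau\mapsto\tau+n^3$, no other factor depends on $n$, and the outer $b^0_{p,\infty}$-norm in $n$ therefore commutes with every step. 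This is the only place where the $l^p_n$-based structure differs from the $l^2_n$ case, and it causes no essential difficulty, so the classical $X^{s,b}$ proof transfers verbatim.
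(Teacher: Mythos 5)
Your proof is correct and is exactly the standard argument the paper relies on: the paper does not prove Lemma \ref{LEM:linear2} itself but refers to \cite{BO1}, \cite{KPV3}, \cite{OH3}, and your three-term splitting of the Duhamel integral (Taylor series near $\tau=0$, the $e^{it\tau}/(i\tau)$ piece, and the constant piece forcing the $X^{s,-1}_{p,1}$ norm) is precisely the decomposition the paper itself records as $\mathcal{M}_1,\mathcal{M}_2,\mathcal{M}_3$ in \eqref{Duhamel}. The reduction to a $\tau$-estimate uniform in $n$, followed by the observation that the $b^0_{p,\infty}L^2_\tau$ structure commutes with pointwise-in-$n$ bounds, is the right (and only) new point needed here, and you have handled it correctly.
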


\noi
The next lemma is the periodic $L^4$ Strichartz estimate due to Bourgain \cite{BO1}.
\begin{lemma} \label{LEM:L4}
Let $u$ be a function on $\T \times \R$.
Then, we have 
$ \|u\|_{L^4_{x, t}} \lesssim \|u\|_{X^{0, \frac{1}{3}}}.$
\end{lemma}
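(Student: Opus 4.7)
The plan is to use the standard Bourgain duality/Plancherel argument. Writing $u^2$ in terms of Fourier data converts the fourth-power norm into an $L^2$-pairing on a cubic surface, and the proof reduces to an elementary counting estimate that exploits a key cancellation of the cubic KdV phase.

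First I would use Plancherel to write
\[
\|u\|_{L^4_{x,t}}^2 = \|u^2\|_{L^2_{x,t}} \sim \|\widehat{u^2}\|_{L^2_{n,\tau}},
\]
and express the convolution
\[
\widehat{u^2}(n,\tau) = \sum_{n_1 \in \mathbb{Z}} \int_\R \ft{u}(n_1,\tau_1)\, \ft{u}(n-n_1,\tau-\tau_1)\, d\tau_1.
\]
Substituting $f(n,\tau) := \jb{\tau-n^3}^{1/3}\ft{u}(n,\tau)$, so that $\|f\|_{L^2_{n,\tau}} = \|u\|_{X^{0,1/3}}$, and applying Cauchy--Schwarz in $(n_1,\tau_1)$ yields
\[
|\widehat{u^2}(n,\tau)|^2 \;\le\; K(n,\tau)\cdot \sum_{n_1}\int |f(n_1,\tau_1)|^2\,|f(n-n_1,\tau-\tau_1)|^2\, d\tau_1,
\]
where
\[
K(n,\tau) := \sum_{n_1 \in \mathbb{Z}} \int_\R \frac{d\tau_1}{\jb{\tau_1-n_1^3}^{2/3}\,\jb{(\tau-\tau_1)-(n-n_1)^3}^{2/3}}.
\]
Summing/integrating the second factor in $(n,\tau)$ returns $\|f\|_{L^2}^4 = \|u\|_{X^{0,1/3}}^4$, so everything reduces to proving $\sup_{n,\tau} K(n,\tau) < \infty$.

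Next I would carry out the $\tau_1$-integral using the elementary convolution bound
\[
\int_\R \jb{a}^{-2/3}\jb{c-a}^{-2/3}\, da \;\lesssim\; \jb{c}^{-1/3},
\]
(the exponents satisfy $2/3+2/3-1 = 1/3$), with $c = \tau - n_1^3 - (n-n_1)^3$. This gives
\[
K(n,\tau) \;\lesssim\; \sum_{n_1 \in \mathbb{Z}} \jb{\tau - n_1^3 - (n-n_1)^3}^{-1/3}.
\]
At this point I would exploit the cubic-to-quadratic collapse
\[
n_1^3 + (n-n_1)^3 \;=\; \tfrac{n^3}{4} + 3n\bigl(n_1 - \tfrac{n}{2}\bigr)^2,
\]
which is the core algebraic feature behind the KdV $L^4$ Strichartz estimate. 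Setting $m := \tau - n^3/4$ and $k := n_1 - n/2$ (half-integer), the task becomes showing
\[
\sup_{m\in\R,\, n\in\Z\setminus\{0\}} \sum_{k} \jb{m - 3nk^2}^{-1/3} \;<\; \infty,
\]
together with the trivial case $n=0$ (where the phase is $0$ and the sum is finite because of the decay along $\tau_1$ already used).

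The main obstacle is this last counting estimate. I would handle it by dyadically decomposing $\jb{m-3nk^2}\sim 2^j$ and bounding the number of integer $k$ in each level set. Since the Jacobian satisfies $\partial_k(3nk^2) = 6nk \sim \sqrt{|n|\,|m|}$ near $k \sim \sqrt{m/(3n)}$, the number of $k$'s with $|m-3nk^2| \sim 2^j$ is at most $O(2^j/\sqrt{|n|\,2^j}+1) = O(2^{j/2}/\sqrt{|n|}+1)$, but after separating the two branches $k \approx \pm\sqrt{m/(3n)}$ one gets the sharper bound $O(2^{j}/\sqrt{|n|\,|m|}+1)$ valid only up to $2^j \lesssim |m|$; beyond this threshold the sum decays geometrically because the phase grows like $|n|k^2$. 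Summing these contributions against $2^{-j/3}$ and checking uniformity in $n,m$ closes the estimate. This divisor-type count is precisely Bourgain's original observation and is the genuinely nontrivial input; everything else is bookkeeping.
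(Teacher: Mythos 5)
Your reduction via Plancherel and Cauchy--Schwarz is the standard opening, and the algebraic identity $n_1^3+(n-n_1)^3=\tfrac{n^3}{4}+3n\bigl(n_1-\tfrac n2\bigr)^2$ is correct, but the counting estimate to which you reduce everything is false: the quantity
\[
\sup_{m\in\R,\ n\in\Z\setminus\{0\}}\ \sum_{k}\jb{m-3nk^2}^{-1/3}
\]
is infinite. Take $n=1$ and $m$ large: for every $k$ with $0\le k\le\sqrt{m/3}$ one has $|m-3k^2|\le m$, so each of these $\sim m^{1/2}$ terms is at least $\jb{2m}^{-1/3}$, and the sum is $\gtrsim m^{1/6}$. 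Your own dyadic bookkeeping shows the same thing if you carry it out: the level $2^j\sim|m|$ contains $\sim 2^j/\sqrt{|n||m|}\sim\sqrt{|m|/|n|}$ values of $k$, which against the weight $2^{-j/3}$ contributes $\sim|m|^{1/6}/\sqrt{|n|}$, so the sum over $j$ does not close uniformly in $(n,m)$. (The case $n=0$ is also not ``trivial'' in this framework: there the phase collapses to $\tau-n_1^3-(-n_1)^3=\tau$, independent of $n_1$, and $K(0,\tau)$ is an infinite sum of identical terms.) Consequently $\sup_{n,\tau}K(n,\tau)=\infty$ and the single Cauchy--Schwarz reduction cannot reach $b=\tfrac13$. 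What this argument does prove is $\|u\|_{L^4}\lesssim\|u\|_{X^{0,b}}$ for $b>\tfrac38$: with weights $\jb{\cdot}^{2b}$ one gets $K_b(n,\tau)\lesssim\sum_k\jb{m-3nk^2}^{1-4b}\lesssim\sum_j 2^{j(1-4b)}\bigl(2^{j/2}+1\bigr)$, which is finite exactly when $4b-1>\tfrac12$.

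The paper itself does not prove this lemma; it quotes it from Bourgain \cite{BO1}, and the endpoint $b=\tfrac13$ genuinely requires more than the argument you propose. The extra input is that the counting function $\#\{n_1:\ |\tau-n_1^3-(n-n_1)^3|\le M\}\lesssim\sqrt{M/|n|}+1$ is large only for small output frequencies $|n|$; Bourgain's proof decomposes dyadically in the modulations of both factors (and in $|n|$) and treats the low-$|n|$ regime separately, rather than taking a single supremum of $K(n,\tau)$ over all $(n,\tau)$. If you only need the lemma as a black box, cite \cite{BO1}; if you want a self-contained proof, your method as written yields only the $b>\tfrac38$ version, and the passage to $\tfrac13$ is precisely the step that is missing.
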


\noi
Lastly, recall that by restricting the Bourgain spaces onto a small time interval
$[0, T]$, we can gain a small power of $T$. See Colliander-Oh \cite{CO1} for the proof.

\begin{lemma} \label{LEM:timedecay}
For $0\leq b' < b \leq \frac{1}{2}$, we have
\begin{equation*} 
\|u\|_{X^{s, b', T}} =\|\eta_{_T}u\|_{X^{s, b', T}} \lesssim T^{b-b'-} \|u\|_{X^{s, b}}.
\end{equation*}

\end{lemma}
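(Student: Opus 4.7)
The equality $\|u\|_{X^{s,b',T}} = \|\eta_T u\|_{X^{s,b',T}}$ is immediate from the definition of the restriction norm: since $\eta_T \equiv 1$ on $[0,T]$ (recall $T \leq 1$), the functions $u$ and $\eta_T u$ coincide on $[0,T]$, so any extension of one is an extension of the other and the two infima agree. The content of the lemma is therefore the global inequality $\|\eta_T u\|_{X^{s,b'}} \lesssim T^{b-b'-}\|u\|_{X^{s,b}}$.

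My first step would be to conjugate away the dispersion. Setting $v(x,t) := S(-t) u(x,t)$, Plancherel gives $\|u\|_{X^{s,b}}^2 = \sum_{n\in\Z}\jb{n}^{2s}\|v_n\|_{H^b_t(\R)}^2$, where $v_n$ is the $n$-th spatial Fourier coefficient of $v$. Since $\eta_T$ depends only on $t$, it commutes with $S(-t)$, so $S(-t)(\eta_T u) = \eta_T v$. Summing frequency-by-frequency with weight $\jb{n}^{2s}$ reduces the claim to the one-dimensional multiplier estimate
\[ \|\eta_T f\|_{H^{b'}_t(\R)} \lesssim T^{b-b'-}\|f\|_{H^b_t(\R)}, \qquad 0\leq b' < b \leq \tfrac{1}{2},\ T\leq 1, \]
uniformly in $f \in H^b_t(\R)$.

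For the base case $b'=0$, I would combine H\"older's inequality with the one-dimensional Sobolev embedding $H^{b-\eps}_t \embeds L^r_t$, valid because $b-\eps < \tfrac{1}{2}$, with $r = 2/(1-2(b-\eps))$. Choosing the conjugate exponent $p = 1/(b-\eps)$ via $1/p + 1/r = 1/2$ gives $\|\eta_T\|_{L^p_t} \sim T^{1/p} = T^{b-\eps}$, and hence
\[ \|\eta_T f\|_{L^2_t} \leq \|\eta_T\|_{L^p_t}\|f\|_{L^r_t} \lesssim T^{b-\eps}\|f\|_{H^{b-\eps}_t} \leq T^{b-\eps}\|f\|_{H^b_t}. \]
The general case $0 < b' < b$ then follows by interpolating this base estimate against a companion bound at the upper endpoint, the details of which are carried out in Colliander--Oh \cite{CO1}.

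The main technical subtlety is that the naive ``trivial'' endpoint $\|\eta_T f\|_{H^b}\lesssim\|f\|_{H^b}$ does \emph{not} hold uniformly in $T$: a direct Fourier or difference-quotient computation produces a $T^{-b}$ loss coming from derivatives falling on $\eta_T$, so the interpolation must be organized carefully around this defect (for instance by matching the two endpoint losses so that they balance at the intermediate value $b'$). The failure of the critical Sobolev embedding $H^{1/2}_t \embeds L^\infty_t$ is ultimately the source of the arbitrarily small $\eps$-loss recorded as ``$-$'' in the exponent $T^{b-b'-}$, and is why one cannot achieve the clean gain $T^{b-b'}$ at the endpoint $b = \tfrac{1}{2}$.
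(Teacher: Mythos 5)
The paper offers no proof of this lemma at all --- it simply cites Colliander--Oh \cite{CO1} --- so the comparison is with the standard argument, which is exactly what you outline. Your identification of the two restriction norms, the conjugation $v=S(-t)u$ reducing matters to the one-dimensional bound $\|\eta_{_T}f\|_{H^{b'}(\R)}\lesssim T^{b-b'-}\|f\|_{H^{b}(\R)}$, and the H\"older--Sobolev computation for the base case $b'=0$ are all correct.

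The one substantive flaw is in your closing discussion: the endpoint bound $\|\eta_{_T}f\|_{H^{b}}\lesssim\|f\|_{H^{b}}$ does \emph{not} fail to be uniform in $T$ for $0\le b<\frac12$; it holds with constant independent of $T\le 1$. Indeed, by the fractional Leibniz rule,
\begin{equation*}
\|\eta_{_T}f\|_{\dot H^{b}_t}\lesssim \|\eta_{_T}\|_{L^\infty_t}\|f\|_{\dot H^{b}_t}
+\big\||D_t|^{b}\eta_{_T}\big\|_{L^{1/b}_t}\|f\|_{L^{2/(1-2b)}_t},
\end{equation*}
and scaling gives $\||D_t|^{b}\eta_{_T}\|_{L^{1/b}_t}=\||D_t|^{b}\eta\|_{L^{1/b}_t}$ with no power of $T$, while $\|f\|_{L^{2/(1-2b)}_t}\lesssim\|f\|_{\dot H^{b}_t}$ by Sobolev; the $T^{-b}$ loss you anticipate comes only from the cruder bound $\||D_t|^{b}\eta_{_T}\|_{L^\infty_t}\sim T^{-b}$. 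With this clean endpoint in hand, interpolating against your $b'=0$ estimate gives $\|\eta_{_T}f\|_{H^{b'}}\lesssim T^{(b-\eps)(1-b'/b)}\|f\|_{H^{b}}=T^{b-b'-}\|f\|_{H^{b}}$ for $b<\frac12$, and the case $b=\frac12$ follows by first discarding $\eps$ of temporal regularity via $\|f\|_{H^{1/2-\eps}}\le\|f\|_{H^{1/2}}$; this, together with the failure of $H^{1/2}\embeds L^\infty$ in your base case, is the actual source of the $\eps$-loss --- there are not two competing losses to balance. So your plan is sound and the base case is complete, but the interpolation step you defer to \cite{CO1} is more routine than your final paragraph suggests, and the obstruction you describe there is not the real one.
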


\section{Stochastic Convolution}

In this section, we study basic properties of the stochastic convolution $\Phi(t)$ defined in \eqref{stoconv}.
In particular, we prove that 
$\eta \Phi $ belongs to $ X_{p, 2}^{s , b, T}$ 
and is continuous from $[0, T]$ into $ \ft{b}^s_{p, \infty}$ for $T \leq 1$ almost surely
for $sp < -1$ and $(b-1)\cdot2<-1$,
where  $\eta(t)$ is a smooth cutoff supported on $[-1, 2]$ with $\eta(t) \equiv 1$ on $[0, 1]$.

Before stating the main results, 
we point out the following.
Let $\phi$ be the identity operator on $L^2(\mathbb{T})$ or be as in \eqref{PHI}.
Then, we know that such $\phi$ is Hilbert-Schmidt from $L^2(\mathbb{T})$
into $H^s(\mathbb{T})$ if and only if $s < -\frac{1}{2}$.
In other words, with a slight abuse of notation, define
\begin{equation} \label{phiphi}
\phi := \sum_{n \in \mathbb{Z}} \phi e_n 
= \sum_{n \in \mathbb{Z}} \phi_n e_n
\end{equation}

\noi
in view of $\phi = \text{diag} (\phi_n ; n \ne 0)$.
Then, we have $\phi \in H^s(\mathbb{T})$ if and only if $s < -\frac{1}{2}$.
Moreover, we have 
$\|\phi\|_{{HS}(L^2; H^s)} = \|\phi\|_{H^s}$,
where $\|\cdot\|_{{HS}(L^2; H^s)}$ denotes the Hilbert-Schmidt norm 
from $L^2(\T)$ to $H^s(\T)$.
For such $\phi$, we also have $\phi \in \ft{b}^s_{p, \infty}(\mathbb{T})$ if and only if $s p \leq  -1$,
and we can use $\|\phi\|_{\ft{b}^s_{p, \infty}}$ 
to discuss the regularity of $\phi$
in place of the Hilbert-Schmidt norm.
This is one of the reasons for using this space.
(We need only $sp < -1$ for our purpose since the nonlinear estimate in Section 5 holds 
for $s = -\frac{1}{2}$ and $ p = 2+$ with $sp < -1$.)

\begin{proposition} \label{PROP:stoint}
Let $0 < T \leq 1$ and $p = 2+$.
Moreover, let $ s= -\frac{1}{2} + \dl$ and $b = \frac{1}{2} -\dl$ with 
$\frac{p-2}{4p} < \delta  < \frac{p-2}{2p}$.
i.e. $sp < -1$ and $(b-1)\cdot 2<-1$.
Then, for the stochastic convolution $\Phi(t)$ defined in \eqref{stoconv} 
with $\phi$ as in \eqref{PHI}, 
we have
\begin{equation} \label{stoint}
\mathbb{E} \big[\|\eta \Phi \|_{X_{p, 2}^{s, b, T}} \big] 
\leq C(\eta, s, p) < \infty.
\end{equation}

\noi
In particular, $\Phi \in X_{p, 2}^{-\frac{1}{2}+\dl, \frac{1}{2}-\dl, T} $ almost surely.
\end{proposition}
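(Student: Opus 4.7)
The plan is to estimate the spacetime Fourier transform of $\eta \Phi$ via the It\^o isometry and combine it with Gaussian hypercontractivity to control the $L^p_n$ moments. First, I would apply a stochastic Fubini to obtain, for $n \neq 0$,
\begin{equation*}
\mathcal{F}[\eta \Phi](n, \tau) = c \int_0^\infty \phi_n(t')\, e^{-it' n^3} \Bigl(\int_{t'}^\infty \eta(t)\, e^{-i(\tau - n^3) t}\, dt\Bigr) d\beta_n(t'),
\end{equation*}
while the $n = 0$ contribution vanishes because $\phi$ annihilates the zero mode. A single integration by parts in $t$ (using that $\eta$ has compact support) shows that the deterministic inner integral is $O(\jb{\tau - n^3}^{-1})$ uniformly in $t'$, and is supported in $t' \in [0,2]$. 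Since $|\phi_n(t')| = 1$, the It\^o isometry then yields
\begin{equation*}
\mathbb{E}\bigl[|\mathcal{F}[\eta \Phi](n, \tau)|^2\bigr] \lesssim \jb{\tau - n^3}^{-2}.
\end{equation*}

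Next, set $F(n) := \|\jb{\tau - n^3}^b\, \mathcal{F}[\eta \Phi](n, \tau)\|_{L^2_\tau}$. Conditionally on $\beta_0$, the random variable $\mathcal{F}[\eta \Phi](n, \tau)$ is a complex Gaussian (a Wiener integral against $\beta_n$ for $n \neq 0$), and because $|\phi_n| \equiv 1$ its conditional second moment is already deterministic and matches the bound above. Gaussian hypercontractivity (applied conditionally, then integrated) gives $\|\mathcal{F}[\eta \Phi](n,\tau)\|_{L^p_\omega} \lesssim_p \jb{\tau - n^3}^{-1}$. Applying Minkowski's inequality in $L^{p/2}_\omega$,
\begin{equation*}
\bigl(\mathbb{E}[F(n)^p]\bigr)^{2/p} \leq \int \jb{\tau - n^3}^{2b}\, \|\mathcal{F}[\eta \Phi](n, \tau)\|_{L^p_\omega}^2\, d\tau \lesssim \int \jb{\tau - n^3}^{2b - 2}\, d\tau \lesssim 1
\end{equation*}
uniformly in $n$, since $2b - 2 = -1 - 2\dl < -1$.

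Finally, to handle the supremum over dyadic blocks in the $b^0_{p, \infty}$ norm, I would pass to the $p$-th power and dominate $\sup_j$ by $\sum_j$:
\begin{equation*}
\mathbb{E}\bigl[\|\eta \Phi\|_{X^{s, b}_{p, 2}}^p\bigr] \leq \sum_{j \geq 0} \sum_{|n| \sim 2^j} \jb{n}^{sp}\, \mathbb{E}[F(n)^p] \lesssim \sum_{j \geq 0} 2^{j(sp + 1)} < \infty,
\end{equation*}
since $sp < -1$. H\"older's inequality then yields \eqref{stoint}, and almost-sure membership $\Phi \in X^{s, b, T}_{p, 2}$ for $T \leq 1$ follows because $\eta \equiv 1$ on $[0, T]$.

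The decisive quantitative input is the $\jb{\tau - n^3}^{-2}$ decay obtained from a single integration by parts, which is only barely integrable against $\jb{\tau - n^3}^{2b}$ thanks to $b < \tfrac{1}{2}$. The main technical care is in justifying the stochastic Fubini for the random phase $\phi_n(t') = e^{i n c_\omega(t')}$, which is progressively measurable and, being a function of $\beta_0$ alone, is independent of $\beta_n$ for $n \neq 0$; this independence is precisely what lets the It\^o isometry and hypercontractivity proceed as in the deterministic-$\phi$ case.
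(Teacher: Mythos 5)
Your proof is correct, but it takes a genuinely different and more streamlined route than the paper's. The paper first trades the $L^2_\tau$ norm with weight $\jb{\tau-n^3}^{1/2-\dl}$ for an $L^p_\tau$ norm with weight $\jb{\tau-n^3}^{1/2+\dl}$ via H\"older (this is where the hypothesis $\dl > \frac{p-2}{4p}$ is consumed), then estimates $\mathbb{E}|\ft{g}(n,\tau)|^p$ pointwise on dyadic blocks $|\tau-n^3|\sim 2^k$; to do so it integrates by parts twice, splitting the oscillatory kernel into a piece $G^{(1)}_n$ with $\jb{\tau-n^3}^{-2}$ decay and a boundary term $G^{(2)}_n$ with only $\jb{\tau-n^3}^{-1}$ decay, and handles the $p$-th moment of the latter through an It\^o-formula decomposition $|I^{(2)}_n|^2 = I'_n + I''_n$ combined with $\mathbb{E}[|I''_n|^{p/2}] \le (\mathbb{E}|I''_n|^2)^{p/4}$. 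You instead stay in $L^2_\tau$ with the weight $b = \frac12-\dl$, use Minkowski (valid since $p\ge 2$) to pull $L^p_\omega$ inside $L^2_\tau$, and need only the crude $\jb{\tau-n^3}^{-1}$ kernel bound from a single integration by parts together with conditional Gaussianity of the Wiener integral (conditioning on $\beta_0$, which is legitimate since $\phi_n$ is a function of $\beta_0$ alone and $|\phi_n|\equiv 1$ makes the conditional variance deterministic); summability then comes from $2b-2=-1-2\dl<-1$, i.e.\ just $b<\frac12$. Your argument in fact never uses $\dl>\frac{p-2}{4p}$, which is harmless since it is a standing hypothesis. What the paper's longer route buys is not sharper constants but reusable structure: the decomposition into $I^{(1)}_n$ and $I^{(2)}_n$ and the associated moment bounds are recycled verbatim in the nonlinear estimate on the second iteration in Section 5 (they are what define $F^N_1$ and $F^N_2$ in \eqref{F_N}), whereas your self-contained argument would leave that machinery still to be set up. One minor point worth making explicit: to bound the restriction norm one should fix a global extension of $\Phi|_{[0,T]}$ whose integrand is supported where $\phi_n$ is controlled (the paper inserts $\chi_{[0,T]}(r)$ for this purpose); since $|\phi_n|\equiv 1$ on all of $\R$ in the present setting this is cosmetic, but it matters for the generalization in Corollary \ref{COR:stoint}.
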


Before going into the proof of Proposition \ref{PROP:stoint}, recall the following.
Let $\beta_1$ and $\beta_2$ be independent real-valued Brownian motions on $(\Omega, \mathcal{F}, P)$, and
$f_1(t, \omega)$ and $f_2(t, \omega)$ be real-valued stochastic processes independent of $\beta_1$ and $\beta_2$.
Then, we can regard $\beta_j$ and $f_j$ 
as $\beta_j (t, \omega) = \beta_j (t, \omega_1)$ and $f_j(t, \omega) = f_j(t, \omega_2)$,
where $\omega = (\omega_1, \omega_2) \in \Omega_1 \times \Omega_2 = \Omega$.
Thus, in taking an expectation, we can first integrate over $\omega_1 \in \Omega_1$. 
Then, for $m \in \mathbb{N}$, we have
\begin{align} \label{Wienerestimate2}
\mathbb{E} & \bigg(  \Big|\int_a^b f_1(t) d \beta_1(t) + \int_a^b f_2(t) d\beta_2(t)\Big|^{2m}\bigg) \notag \\
&= \mathbb{E} \bigg(\sum_{k = 0}^{2m} 
\begin{pmatrix} 2m \\k \end{pmatrix}
\Big(\int_a^b f_1(t) d\beta_1(t)\Big)^k  \Big(\int_a^b f_2(t) d\beta_2(t)\Big)^{2m-k}\bigg) \notag \\
&= \mathbb{E}_{\Omega_2} \bigg[\sum_{n = 0}^{m}  \begin{pmatrix} 2m \\2n \end{pmatrix}
\frac{(2n)!}{2^n n!}\|f_1(\cdot, \omega_2)\|_{L^2(a, b)}^{2n} \frac{(2(m- n))!}{2^{m-n} (m-n)!} \|f_2(\cdot, \omega_2)\|_{L^2(a, b)}^{2(m-n)}\bigg].
\end{align}

\noi
In the computation above, we used the fact that, for each fixed $\omega_2$, 
$\int_a^b f_j(t, \omega_2) d \beta_j(t, \omega_1)$
is a Gaussian random variable on $\Omega_1$ with variance $\|f_j(\cdot, \omega_2)\|^2_{L^2(a, b)}$.

\begin{proof}
By H\"older inequality, we have
\begin{align*}
\| \jb{\tau - n^3}^{\frac{1}{2}-\dl} \ft{u}(n, \tau)\|_{L^2_\tau}
& \leq \|\jb{\tau - n^3}^{-2\dl} \|_{L_\tau^\frac{2p}{p-2}} 
\|\jb{\tau - n^3}^{\frac{1}{2}+\dl} \ft{u}(n, \tau)\|_{L^p_\tau}.
\end{align*}

\noi
i.e. We have 
$\|\eta \Phi \|_{X_{p, 2}^{s, \frac{1}{2}-\dl}}
\lesssim \|\eta \Phi \|_{X_{p, p}^{s, \frac{1}{2}+\dl}}$
as long as $\dl > \frac{p-2}{4p}$.
Thus,  we will work in  $X_{p, p}^{s, \frac{1}{2}+\dl}$
in the following.

Let $g(t) = \eta(t) \int_0^t S(-r) \phi(r) dW(r)$. 
i.e. $\eta(t) \Phi(\cdot, t) = S(t) g(\cdot, t)$. 
Assume that each $\beta_n$ is extended to a Brownian motion on $\R$
in such a way that the family $\{\beta_n\}_{n \geq 0}$ is still independent.
Note that for $t \in [0, T]$, we have 
\begin{equation} \label{GNT}
\ft{g}(n, t) = \eta(t) \int_0^t \eta(r) e^{-i r n^3}\phi_n(r)\chi_{[0, T]}(r) \tfrac{1}{\sqrt{2}}d\beta_n(r).
\end{equation}

\noindent
We have inserted $\eta(r)$ and $\chi_{[0, T]}(r)$ in the integrand 
since $\eta(r) \chi_{[0, T]}(r) \equiv 1$ for $r \in [0, t] \subset [0, T]$.
For notational simplicity, we use $\phi_n(r)$ to denote $\phi_n(r)\chi_{[0, T]}(r)$ in the following.
i.e. we assume that $\phi_n$ is supported on $[0, T]$.
By \eqref{PHI}, we have  $|\phi_n(r)| \leq 1$ for $r \in \mathbb{R}$.

Now, we write 
the left hand side of \eqref{stoint}  as 
\begin{align} \label{Westimate1}
\mathbb{E} \Big( \| \eta \Phi \|_{X_{p, p}^{s,\frac{1}{2}+\dl, T}} \Big)
\lesssim & \  \mathbb{E} \bigg[\sup_j 2^{js} 
\Big( \sum_{|n|\sim 2^j} \sum_{k = 1}^\infty 2^{kp(\frac{1}{2}+ \delta)} 
 \int_{|\tau|\sim 2^k} |\ft{g}(n, \tau)|^p d\tau \Big)^\frac{1}{p} \bigg] \notag \\
& + \mathbb{E} \bigg[\sup_j 2^{js} 
\Big( \sum_{|n|\sim 2^j}  \int_{|\tau|\leq 2} |\ft{g}(n, \tau)|^p d\tau\Big)^\frac{1}{p} \bigg].
\end{align}

\noindent
$\bullet$ {\bf Part 1:} 
First, we estimate the second term in \eqref{Westimate1}.
Let 
\begin{equation} \label{G_n}
G_n(r, \tau) = \eta(r) e^{-irn^3} \phi_n(r) \int_r^\infty \eta(t) e^{-it\tau}  dt.
\end{equation}

\noindent
Also write $\beta_n = \beta_n^{(r)} + i \beta_n^{(i)}$ 
where $\beta_n^{(r)} = \text{Re}\, \beta_n$ and $\beta_n^{(i)} = \text{Im}\, \beta_n $.
Then, by the stochastic Fubini Theorem,  we have, for $m \in \mathbb{N}$,
\begin{align} \label{W1estimate2}
\mathbb{E}\big[  |\ft{g}(n, \tau)|^{2m} \big]
& = \mathbb{E}\bigg( \Big|\int_\R \eta(t) e^{-it\tau}\int_{-\infty}^t \eta(r) e^{-irn^3} 
\phi_n(r) \tfrac{1}{\sqrt{2}} d \beta_n(r) dt\Big|^{2m} \bigg) \notag \\
&= 2^{-m} \mathbb{E}\bigg(\Big|\int_{-1}^2  G_n(r, \tau)  d \beta_n(r)\Big|^{2m}\bigg) \\
& \lesssim \mathbb{E} \bigg( \Big| \int_{-1}^2 \text{Re}G_n(r, \tau)  d \beta_n^{(r)}(r) 
-\int_{-1}^2 \text{Im}G_n(r, \tau)  d \beta_n^{(i)}(r) \Big|^{2m} \bigg) \notag \\
& \hphantom{X} + \mathbb{E} \bigg( \Big| \int_{-1}^2 \text{Im}G_n(r, \tau)  d \beta_n^{(r)}(r) 
+ \int_{-1}^2 \text{Re}G_n(r, \tau)  d \beta_n^{(i)}(r)\Big|^{2m} \bigg).
\notag 
\end{align}

\noi
Note that $|\text{Re} G_n(r, \tau)|, \, |\text{Im} G_n(r, \tau)| 
\leq |G_n(r, \tau)| \leq \| \eta\|_{L^1} |\phi_n(r)| \lesssim \| \eta\|_{L^1} \chi_{[0, T]}(r)$.
Thus, we have $\| \text{Re} G_n(r, \tau)\|_{L^2_r}^{2k}
\|\text{Im} G_n(r, \tau)\|_{L^2_r}^{2(m-k)}\lesssim \| \eta\|_{L^1}^{2m}$
for $k = 0, \cdots, m$.
Then, by \eqref{Wienerestimate2} along with the independence of $\phi_n$, $\beta_n^{(r)}$ and $\beta_n^{(i)}$, we have
\begin{align*}
\|\ft{g}(n, \tau)\|_{L^{2m}(\Omega)} \leq C = C(\eta, m)
\end{align*}

\noi
independent of $n$ and $\tau$.
Hence, for $p \in (2, 4)$,  we have
\begin{align} \label{XX1}
\big(\mathbb{E}\big[ |\ft{g}(n, \tau)|^p \big] \big)^\frac{1}{p}
\leq \|\ft{g}(n, \tau)\|^\theta_{L^2(\Omega)} \|\ft{g}(n, \tau)\|^{1-\theta}_{L^4(\Omega)}
\lesssim 1, 
\end{align}

\noi
by interpolation,
where $\theta \in (0, 1)$ such that 
$\frac{1}{p} = \frac{\theta}{2} + \frac{1-\theta}{4}$.
Then, the second term in \eqref{Westimate1} is estimated by
\begin{align} \label{XX2}
\eqref{Westimate1} & \leq \bigg( \sum_{j = 0}^\infty 2^{jsp}    \sum_{|n|\sim 2^j}
\int_{|\tau|\leq 2} \mathbb{E} \big[|\ft{g}(n, \tau)|^p\big] d\tau \bigg)^\frac{1}{p}
\lesssim \Big( \sum_{j = 0}^\infty 2^{jsp}    \sum_{|n|\sim 2^j} 1 
 \Big)^\frac{1}{p} \\
& \sim \Big( \sum_{j = 0}^\infty 2^{(sp + 1) j}    \Big)^\frac{1}{p}
\leq C < \infty, \notag
\end{align}

\noi
since $sp < -1$.

\noindent
$\bullet$ {\bf Part 2:} 
Next, we estimate the first term in \eqref{Westimate1}.
Let \begin{equation} \label{G12}
\begin{cases}
G^{(1)}_n(r, \tau) = \eta(r) e^{-i r n^3} \phi_n(r) \int_r^\infty \eta'(t) \frac{e^{-i t \tau}}{i \tau} dt,\\
G^{(2)}_n(r, \tau) = \eta^2(r) e^{-i r n^3} \phi_n(r)  \frac{e^{-i r \tau}}{i \tau}.
\end{cases}
\end{equation}

\noindent
Then,  by the stochastic Fubini theorem and integration by parts, we have 
\begin{align} \label{GNT2}
\sqrt{2} \ft{g}(n, \tau) & = \int_{-1}^2  G_n(r, \tau)  d \beta_n(r)  = \int_{-1}^2 G^{(1)}_n(r, \tau) d\beta_n(r)  
+ \int_{-1}^2  G^{(2)}_n(r, \tau) d\beta_n(r) \\
& =:  I^{(1)}_n(\tau) + I^{(2)}_n(\tau). \notag
\end{align}

\noi
Thus, we have $|\ft{g}(n, \tau)|^p \lesssim 
\big|I^{(1)}_n(\tau)\big|^p + \big|I^{(2)}_n(\tau)\big|^p$.

First, we estimate the contribution from $G^{(1)}_n$. 
For $|\tau|\sim 2^k$,  we have 
\begin{equation} \label{Partialinteg}
\bigg|\int_r^\infty \eta'(t) \frac{e^{-i t \tau}}{i \tau} dt \bigg| 
\leq | \tau^{-2}\eta'(r)| + \bigg|\int_r^\infty \eta''(t) \frac{e^{-i t \tau}}{\tau^2} dt \bigg|
\leq C_\eta 2^{-2k},
\end{equation}

\noi
by partial integration.
Thus, we have $|G_n^{(1)}(r, \tau)| \lesssim 2^{-2k}$.
Then, repeating a similar computation as in Part 1, we obtain
\begin{align} \label{XX3}
\big(\mathbb{E}\big[ |I^{(1)}_n(\tau)|^p \big] \big)^\frac{1}{p}
\leq \|I^{(1)}_n(\tau)\|^\theta_{L^2(\Omega)} \|I^{(1)}_n(\tau)\|^{1-\theta}_{L^4(\Omega)}
\lesssim 2^{-2k}, 
\end{align}

\noi
by \eqref{Wienerestimate2} and interpolation.
Hence, the contribution to \eqref{Westimate1} is estimated by
\begin{align} \label{XX4}
\eqref{Westimate1}
& \leq   
\bigg( \sum_{j=0}^\infty 2^{jsp} \sum_{|n|\sim 2^j} \sum_{k = 1}^\infty 2^{kp(\frac{1}{2}+ \delta)} 
 \int_{|\tau|\sim 2^k} \mathbb{E} \big[|I^{(1)}_n(\tau)|^p\big] d\tau \bigg)^\frac{1}{p}   \\
& \lesssim   
\Big( \sum_{j=0}^\infty 2^{j(sp+1)}  \sum_{k = 1}^\infty 
2^{k(-\frac{3p}{2}+ \delta p + 1)} \Big)^\frac{1}{p}  \leq C < \infty,  \notag
\end{align}

\noi
since $sp < -1$ and $-\frac{3p}{2}+ \delta p + 1 < 0$.

\medskip

Now, we consider the contribution from $I^{(2)}_n(\tau)$.
With $\beta_n = \beta_n^{(r)} + i \beta_n^{(i)}$, we have 
$|I^{(2)}_n(\tau)|^2 \lesssim \Big|\int_{-1}^2  G^{(2)}_n(r, \tau) d\beta^{(r)}_n(r)\Big|^2 
+ \Big|\int_{-1}^2  G^{(2)}_n(r, \tau) d\beta^{(i)}_n(r)\Big|^2$.
We only estimate the first term since the second term is estimated in the same way.
By Ito formula (c.f. \cite{DDT1}), we have
\begin{align*}
\bigg|\int_{-1}^2  G^{(2)}_n(r, \tau)&  d\beta^{(r)}_n(r)\bigg|^2
=  \int_{-1}^2 \eta^4(t) \frac{|\phi_n(t)|^2}{\tau^2} dt \\
& + 2 \text{Re} \int_{-1}^2 \int_{-\infty}^t G_n^{(2)} (r, \tau) d \beta_n^{(r)}(r) 
\cj{G_n^{(2)}} (t, \tau) d \beta_n^{(r)} (t)
=: I_n'(\tau) + I_n''(\tau).
\end{align*}

\noindent
The contribution from $I'_n(\tau)$ is at most
\begin{align} \label{XX5}
\eqref{Westimate1}
& \lesssim   
\bigg( \sum_{j=0}^\infty 2^{jsp} \sum_{|n|\sim 2^j} \sum_{k = 1}^\infty 2^{kp(\frac{1}{2}+ \delta)} 
 \int_{|\tau|\sim 2^k} |\tau|^{-p} d\tau  
 \Big(\int_{-1}^2 \eta^4(t) dt \Big)^\frac{p}{2}\bigg)^\frac{1}{p}  \\ 
& \lesssim  \|\eta\|_{L^4}^2 
\Big( \sum_{j=0}^\infty 2^{j(sp+1)} \sum_{k = 1}^\infty 2^{k (-\frac{p}{2}+ \delta p + 1)} 
 \Big)^\frac{1}{p}  
 \leq C < \infty, \notag
\end{align}

\noi
since $sp < -1$ and $\delta < \frac{p-2}{2p}$.
 
We finally estimate the  contribution from $I''_n(\tau)$.
Write  $ I''_n(\tau) =  \int_{-1}^2 H_n(t)  d \beta_n^{(r)} (t)$,
where $H_n(t) = \int_{-\infty}^t  \wt{H}_n (r, t)  d \beta_n^{(r)} (r)$ with 
\begin{equation} \label{H_n}
\wt{H}_n (r, t) =  2 \tau^{-2} \text{Re} \big(\eta^2(r) \eta^2(t) e^{i(t-r)n^3} \phi_n(r) \cj{\phi_n(t) } 
 e^{i (t - r) \tau} \big).
\end{equation}

\noi
Then, by Ito isometry and $|\phi_n (w, t)| \leq 1$ for all $(\omega, t) \in \Omega \times \mathbb{R}$, we have
\begin{align} \label{XX6}  
\mathbb{E} \big[ |I''_n(\tau)|^2 \big] &= 
\mathbb{E} \bigg[  \Big(\int_{-1}^2 H_n(t)  d \beta_n^{(r)} (t) \Big)^{2} \bigg] 
\sim \int_{-1}^2 \mathbb{E}\big[ H_n^{2}(t)\big] dt \notag \\
&= \int_{-1}^2 \mathbb{E}\Big[ \Big(\int_{-\infty}^t  \wt{H}_n (r, t)  d \beta_n^{(r)} (r)\Big)^2 \Big] dt 
= \int_{-1}^2 \int_{-1}^t  \mathbb{E}\big[ |\wt{H}_n (r, t)|^2  \big] dr dt \\
& \lesssim  \tau^{-4}  \int_{-1}^2 \int_{-1}^t  \eta^4(r) \eta^4(t)  dr dt 
\lesssim \tau^{-4}. \notag
\end{align}

\noi
Hence, the contribution from $I''_n(\tau)$ is at most
\begin{align} \label{XX7}
\eqref{Westimate1}
& \lesssim   
\bigg( \sum_{j=0}^\infty 2^{jsp} \sum_{|n|\sim 2^j} \sum_{k = 1}^\infty 2^{kp(\frac{1}{2}+ \delta)} 
 \int_{|\tau|\sim 2^k} \mathbb{E} \big[ |I''_n(\tau)|^\frac{p}{2} \big] d\tau  \bigg)^\frac{1}{p}   \notag \\ 
& \lesssim   
\bigg( \sum_{j=0}^\infty 2^{jsp} \sum_{|n|\sim 2^j} \sum_{k = 1}^\infty 2^{kp(\frac{1}{2}+ \delta)} 
 \int_{|\tau|\sim 2^k} \big(\mathbb{E} \big[ |I''_n(\tau)|^2 \big] \big)^\frac{p}{4} d\tau  \bigg)^\frac{1}{p}   \\ 
& \lesssim  
\Big( \sum_{j=0}^\infty 2^{j(sp+1)} \sum_{k = 1}^\infty 2^{k (-\frac{p}{2}+ \delta p + 1)} 
 \Big)^\frac{1}{p}  
 \leq C < \infty,   \notag
\end{align}

\noi
for $p \leq 4$,  $sp < -1$, and $\delta < \frac{p-2}{2p}$.
\end{proof}

We state a corollary to the proof of Proposition \ref{PROP:stoint}
for a general diagonal covariance operator 
$\phi (t, \omega)= \text{diag} (\phi_n (t, \omega) ; n \in \mathbb{Z})$,
which is independent of $\{\beta_n\}_{n \geq 1}$.

\begin{corollary} \label{COR:stoint}
Let $0 < T \leq 1$, $p = 2+$, and $s, s' \in \R$ with $s < s'$.
Moreover, let $b = \frac{1}{2} -\dl$ with 
$\frac{p-2}{4p} < \delta  < \frac{p-2}{2p}$.
i.e. $(b-1)\cdot 2<-1$.
Then, for the stochastic convolution $\Phi(t)$ defined in \eqref{stoconv} 
with $\phi \in L^p( [0, T] \times \Omega; \ft{b}^{s'}_{p, \infty})$,
independent of $\{\beta_n\}_{n \geq 1}$,  
we have
\begin{equation} \label{stoint2}
\mathbb{E} \big[\|\eta \Phi \|_{X_{p, 2}^{s, b, T}} \big] 
\leq C(\eta, s, s', p) \|\phi\|_{L^p([0, T] \times \Omega; \ft{b}^{s'}_{p, \infty})}.
\end{equation}

\noi
In particular, $\Phi \in X_{p, 2}^{s, \frac{1}{2}-\dl, T} $ almost surely.
\end{corollary}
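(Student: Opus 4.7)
The plan is to repeat the proof of Proposition \ref{PROP:stoint} essentially verbatim, tracking the $\phi_n$-dependence at each place where the uniform bound $|\phi_n|\le 1$ was previously invoked, and then closing the estimate by conditioning on $\phi$. By the opening H\"older reduction of the proposition and Jensen's inequality (applied to $\mathbb{E}[X]\le(\mathbb{E}[X^p])^{1/p}$), it suffices to bound
\[
\mathbb{E}\bigl[\|\eta\Phi\|^p_{X^{s,\frac{1}{2}+\dl,T}_{p,p}}\bigr]
\lesssim \sum_j 2^{jsp}\sum_{|n|\sim 2^j}\int \jb{\tau-n^3}^{(\frac{1}{2}+\dl)p}\,
\mathbb{E}\bigl[|\ft g(n,\tau)|^p\bigr]\,d\tau,
\]
with $g(t)=\eta(t)\int_0^t S(-r)\phi(r)\,dW(r)$. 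I would then split the $\tau$-variable into $|\tau|\le 2$ and dyadic pieces $|\tau|\sim 2^k$, and in the high-frequency regime decompose $\ft g=I^{(1)}_n+I^{(2)}_n$ via \eqref{G12}--\eqref{GNT2} and further split $|I^{(2)}_n|^2$ through Ito's formula, exactly as in the proposition.

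The key new ingredient is conditioning on $\phi$: writing $\omega=(\omega_1,\omega_2)$ so that the $\{\beta_n\}_{n\ge 1}$ depend only on $\omega_1$ and $\phi$ only on $\omega_2$, for each fixed $\omega_2$ the quantities $\ft g(n,\tau)$, $I^{(j)}_n(\tau)$, $I'_n(\tau)$, $I''_n(\tau)$ are (sums of) single or double Ito integrals with deterministic kernels $G_n, G^{(j)}_n, \wt H_n$ that now depend linearly (resp.\ bilinearly) on the sample path $\phi_n(\cdot,\omega_2)$. Reapplying the moment estimate \eqref{Wienerestimate2}, the interpolation of \eqref{XX1} and \eqref{XX3}, and the Ito isometries of \eqref{XX5} and \eqref{XX6}, the conditional $p$-th moments in $\omega_1$ retain their original $|\tau|$-decay, but the previous uniform factor $1$ is replaced by $\|\phi_n(\cdot,\omega_2)\|_{L^2(0,T)}^p$. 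Taking the expectation in $\omega_2$ and applying Jensen with $T\le 1$,
\[
\mathbb{E}\bigl[\|\phi_n\|^p_{L^2(0,T)}\bigr] \leq T^{\frac{p}{2}-1}\int_0^T \mathbb{E}\bigl[|\phi_n(t)|^p\bigr]\,dt \lesssim \int_0^T \mathbb{E}\bigl[|\phi_n(t)|^p\bigr]\,dt,
\]
and the $k$-sums converge under $\dl<(p-2)/(2p)$ exactly as in \eqref{XX2}, \eqref{XX4}, \eqref{XX5}, \eqref{XX7}.

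We are thereby reduced to controlling $\sum_j 2^{jsp}\sum_{|n|\sim 2^j}\int_0^T \mathbb{E}[|\phi_n(t)|^p]\,dt$. In Proposition \ref{PROP:stoint} the summability in $j$ came from $sp<-1$; here it is instead supplied by the spectral gap $s<s'$. Splitting $2^{jsp}=2^{j(s-s')p}\cdot 2^{js'p}$ and using
\[
2^{js'p}\sum_{|n|\sim 2^j}|\phi_n(t,\omega)|^p \leq \|\phi(t,\omega)\|^p_{\ft b^{s'}_{p,\infty}},
\]
the $j$-sum is bounded by $C_{s,s'}\,\mathbb{E}\int_0^T \|\phi(t)\|^p_{\ft b^{s'}_{p,\infty}}\,dt = C_{s,s'}\|\phi\|^p_{L^p([0,T]\times\Omega;\ft b^{s'}_{p,\infty})}$, which yields \eqref{stoint2} after taking the $p$-th root. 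The only step requiring genuine verification is that the double-stochastic integral underlying $I''_n(\tau)$ still delivers the full $|\tau|^{-4}$ decay of \eqref{XX6} when its kernel $\wt H_n$ now carries the random factor $\phi_n(r)\cj{\phi_n(t)}$; once that is checked, the remainder is routine bookkeeping on top of Proposition \ref{PROP:stoint}.
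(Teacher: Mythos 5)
Your proposal is correct and follows essentially the same route as the paper's proof: condition on $\phi$ via the product structure $\Omega = \Omega_1\times\Omega_2$, replace the uniform bound $|\phi_n|\le 1$ by $\|\phi_n(\cdot,\omega_2)\|_{L^2[0,T]}^p$ in each of the moment estimates, and recover summability in $j$ from the factor $2^{j(s-s')p}$ with $s<s'$ rather than from $sp<-1$. The one item you flag for verification, the $|\tau|^{-4}$ decay of $\mathbb{E}_{\Omega_1}[|I''_n(\tau)|^2]$ with the random kernel, does go through exactly as you anticipate (the iterated Ito isometry yields $\tau^{-4}\|\phi_n\|^4_{L^2[0,T]}$, and then $\mathbb{E}[|I''_n|^{p/2}]\le \mathbb{E}_{\Omega_2}[\|I''_n\|^{p/2}_{L^2(\Omega_1)}]$ for $p\le 4$), which is precisely how the paper closes that case.
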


\begin{proof}
In the proof of Proposition \ref{PROP:stoint}, 
we used $|\phi_n(t)| \leq 1$ whenever $\phi_n(t)$ appeared. 
Now, we briefly go through the proof of Proposition \ref{PROP:stoint},
keeping track of $\phi_n(t)$.
Since $\phi$ is independent of $\{\beta_n\}_{n \geq 1}$, 
we regard $\beta_n$ and $\phi_n$ 
as $\beta_n (t, \omega) = \beta_n (t, \omega_1)$ and $\phi_n(t, \omega) = \phi_n(t, \omega_2)$,
where $\omega = (\omega_1, \omega_2) \in \Omega_1 \times \Omega_2 = \Omega$.

In \eqref{XX1}, we have 
$ \mathbb{E}\big[ |\ft{g}(n, \tau)|^p \big] 
\lesssim \mathbb{E}_{\Omega_2}\|\phi_n( \cdot, \omega_2)\|^p_{L^2[0, T]}$.
Then, in \eqref{XX2}, we have
\begin{align*} 
\eqref{Westimate1} & \leq \bigg( \sum_{j = 0}^\infty 2^{jsp}    \sum_{|n|\sim 2^j}
\int_{|\tau|\leq 2} \mathbb{E}_{\Omega_2}\|\phi_n( \cdot, \omega_2)\|^p_{L^2[0, T]} d\tau \bigg)^\frac{1}{p}\\
& \leq
\bigg(\sum_{j = 0}^\infty 2^{j(s-s')p}2^{j s'p} 
\sum_{|n|\sim 2^j}
\|\phi_n( \cdot, \omega_2)\|^p_{L^p([0, T]\times \Omega_2)} \bigg)^\frac{1}{p} \\
& \lesssim  \|\phi\|_{L^p([0, T] \times \Omega; \ft{b}^{s'}_{p, \infty})}  
\end{align*}

\noi
since $s - s' < 0$.
A similar modification in \eqref{XX3} and \eqref{XX4} (and \eqref{XX5})
takes care of the contribution from $I^{(1)}_n(\tau)$ (and $I'_n(\tau)$, respectively.)
Now, as for $I''_n(\tau)$, we first integrate only over $\Omega_1$ in \eqref{XX6}
and obtain
\begin{align*}   
\mathbb{E}_{\Omega_1} \big[ |I''_n(\tau)|^2 \big] 
& \lesssim  \tau^{-4}  \int_{-1}^2 \int_{-1}^t  \eta^4(r) \eta^4(t) 
|\phi_n(r)|^2|\phi_n(t)|^2 dr dt 
\lesssim \tau^{-4} \|\phi_n\|_{L^2[0, T]}^4. 
\end{align*}

\noi
Then, in \eqref{XX7}, we have
\begin{align*}
\mathbb{E}\big[ |I''_n(\tau)|^\frac{p}{2} \big]
= \mathbb{E}_{\Omega_2} \big[ \|I''_n(\tau)\|_{L^\frac{p}{2}(\Omega_1)}^\frac{p}{2}\big]
\leq \mathbb{E}_{\Omega_2} \big[ \|I''_n(\tau)\|_{L^2(\Omega_1)}^\frac{p}{2}\big]
\lesssim \tau^{-p} \,\mathbb{E}_{\Omega_2} \|\phi_n(\cdot, \omega_2) \|_{L^2[0, T]}^p
\end{align*}

\noi
for $p \in [2, 4]$.  
The rest follows as before.
\end{proof}

Now, we discuss the continuity of the stochastic convolution.
In the remaining of this section, 
 we show that the stochastic convolution $\Phi(t)$ defined in \eqref{stoconv} belongs to 
$C([0, T]; \ft{b}^s_{p, \infty}(\T))$ almost surely.
With $\beta_n = \beta_n^{(r)} + i\beta_n^{(i)}$, we have 
\begin{equation} \label{W3estimate0}
\Phi(t) = \frac{1}{\sqrt{2}} \sum_{n \ne 0} \int_0^t S(t - r) \phi_n(r) e_n d \beta_n^{(r)}(r)
+ i \frac{1}{\sqrt{2}} \sum_{n \ne 0} \int_0^t  S(t - r) \phi_n (r)e_n d \beta_n^{(i)}(r),
\end{equation}

\noindent
since
$\phi e_0 = 0$ and  $\phi e_n = \phi_n e_n$, $n\ne 0$.
In the following, we only show the continuity of the  first stochastic convolution in \eqref{W3estimate0},
which we shall denote by $\Phi^{(r)} (t)$.
Also, let $W^{(r)} (t) = \frac{1}{\sqrt{2}}\sum_n \beta_n^{(r)}(t) e_n$.
As in Da Prato \cite{DAPRATO1}, we use the factorization method based on the elementary identity
\begin{equation} \label{W3estimate1}
\int_r^t (t-t')^{\al - 1} (t' - r)^{-\al} d t' = \frac{\pi}{\sin \pi \al},
\end{equation}

\noindent
with $\al \in (0, 1)$ for $0 \leq r \leq t' \leq t$. 
Using \eqref{W3estimate1}, we can write the first term in \eqref{W3estimate0} as 
\begin{equation} \label{W3estimate2}
\Phi^{(r)} (t) = \frac{\sin \pi \al}{\pi} \int_0^t S(t-t') (t-t')^{\al - 1} Y(t') d t',
\end{equation}

\noindent
where
\begin{equation} \label{W3estimate3}
Y(t') =  \int_0^{t'} S(t'-r) (t'-r)^{-\al} \phi(r) dW^{(r)}(r).
\end{equation}

First, we present the following lemma
which provides a criterion for the continuity of \eqref{W3estimate2}
in terms of the $L^{2m}$-integrability of $Y(t')$.

\begin{lemma} [Lemma 2.7 in  \cite{DAPRATO1}]\label{LEM:stoconti1}
Let $T>0$, $\al \in (0, 1)$, and $m > \frac{1}{2\al}$.
For $f \in L^{2m} ( [0, T]; \ft{b}^s_{p, \infty}(\T) )$, let
\begin{equation*}
F(t) = \int_0^t S(t-t') (t-t')^{\al -1} f(t') dt', \quad 0 \leq t \leq T.
\end{equation*}

\noindent
Then, $F \in C([0, T]; \ft{b}^s_{p, \infty}(\T))$.
Moreover, there exists $C = C(m, T)$ such that 
\begin{equation*}
\|F(t)\|_{\ft{b}^s_{p, \infty}} \leq  C\|f\|_{L^{2m} ( [0, T]; \ft{b}^s_{p, \infty} )}, \quad 0 \leq t \leq T.
\end{equation*}
\end{lemma}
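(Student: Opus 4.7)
The plan is to implement the Da Prato--Zabczyk factorization method directly in our Besov-type setting. The crucial observation is that $S(t) = e^{-t\dx^3}$ is a Fourier multiplier with unimodular symbol $e^{itn^3}$, so it acts isometrically on $\ft{b}^s_{p,\infty}(\T)$: sliding $S(t-t')$ past the norm costs nothing. This reduces everything to analyzing a scalar convolution with the kernel $(t-t')^{\al-1}$.

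First I would establish the pointwise-in-$t$ bound. Writing
\[\|F(t)\|_{\ft{b}^s_{p,\infty}} \leq \int_0^t (t-t')^{\al-1}\|f(t')\|_{\ft{b}^s_{p,\infty}}\,dt'\]
and applying H\"older's inequality with dual exponents $q = 2m$ and $q' = \tfrac{2m}{2m-1}$ produces a factor $\bigl(\int_0^t (t-t')^{(\al-1)q'}dt'\bigr)^{1/q'}$. The kernel is integrable precisely when $(\al-1)q' > -1$, which reduces to $m > \tfrac{1}{2\al}$, our hypothesis. The same computation on a subinterval $[t_1, t_2]$ yields the H\"older-type gain $(t_2-t_1)^{\al - 1/(2m)}\|f\|_{L^{2m}([t_1,t_2];\,\ft{b}^s_{p,\infty})}$.

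For continuity, I would split, for $0 \leq t_1 < t_2 \leq T$,
\[F(t_2)-F(t_1) = \int_{t_1}^{t_2} S(t_2-t')(t_2-t')^{\al-1}f(t')\,dt' + \int_0^{t_1}\bigl[S(t_2-t')(t_2-t')^{\al-1} - S(t_1-t')(t_1-t')^{\al-1}\bigr]f(t')\,dt'.\]
The first integral is controlled by the H\"older-type gain above and vanishes as $t_2\to t_1$ by absolute continuity of the Lebesgue integral. For the second, I would approximate $f$ in $L^{2m}([0,T];\ft{b}^s_{p,\infty})$ by step functions taking values in spaces of trigonometric polynomials; for such band-limited approximants, continuity of the integrand in $t$ follows by dominated convergence, since $S(h)g \to g$ in $\ft{b}^s_{p,\infty}$ whenever $g$ has finite Fourier support. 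The uniform bound from the first step then promotes this approximant-level continuity to continuity of the limit $F$.

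The main obstacle is precisely this last density step. The group $S(t)$ is \emph{not} strongly continuous on $\ft{b}^s_{p,\infty}$: the $\sup_j$ in the norm prevents $\|S(h)g - g\|_{\ft{b}^s_{p,\infty}}$ from decaying as $h\to 0$ for generic $g$, since a single high-frequency mode $\ft{g}(n) = \delta_{n=N}$ contributes $|e^{-ihN^3}-1|$, which does not go to $0$ uniformly in $N$. Restricting to trigonometric polynomials recovers strong continuity on those particular elements, and the uniform H\"older-type bound from Step~1 is what transports continuity from the approximants to the full statement.
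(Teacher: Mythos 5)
The paper does not actually prove this lemma: it cites Lemma 2.7 of Da Prato's lecture notes and adds a one-line remark that the proof ``makes no use of the Hilbert space structure'' and hence transfers to $\ft{b}^s_{p,\infty}(\T)$. Your write-up reconstructs that proof explicitly, and your quantitative steps are all correct: $S(t)$ is isometric on $\ft{b}^s_{p,\infty}$ because its symbol is unimodular, the triangle inequality for the Bochner integral reduces everything to the scalar kernel $(t-t')^{\al-1}$, and H\"older with exponent $2m$ gives integrability of $(t-t')^{(\al-1)\frac{2m}{2m-1}}$ exactly when $m>\frac{1}{2\al}$, together with the gain $(t_2-t_1)^{\al-\frac{1}{2m}}$ on subintervals. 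You are also right, and more candid than the paper, that the delicate point is the failure of strong continuity of $S(t)$ on $\ft{b}^s_{p,\infty}$; Da Prato's own proof does use strong continuity of the semigroup, so the paper's remark glosses over precisely the issue you isolate.

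The gap is in your proposed repair. You approximate $f$ in $L^{2m}([0,T];\ft{b}^s_{p,\infty})$ by step functions taking values in trigonometric polynomials, but you never justify that such functions are dense, and on the space of all distributions with finite $\ft{b}^s_{p,\infty}$-norm they are \emph{not}: the $\sup_j$ over dyadic blocks is an $\ell^\infty$-type norm, so for instance $\ft{g}(n)=\jb{n}^{-s}2^{-j/p}$ for $|n|\sim 2^j$ satisfies $\|g-\mathbb{P}_{\leq N}g\|_{\ft{b}^s_{p,\infty}}\sim 1$ for every $N$. This is the \emph{same} $\ell^\infty$ obstruction that you correctly identify as destroying strong continuity of $S(t)$, so your fix runs into the wall it was designed to avoid. (Bochner measurability makes $f$ essentially separably valued, but the resulting separable subspace need not lie in the closure of the trigonometric polynomials.) The repair is to specify that $\ft{b}^s_{p,\infty}$ here means the closure of the smooth functions under the norm \eqref{Besov} --- the separable subspace one needs anyway for the abstract Wiener space statements quoted from \cite{OH3} --- on which trigonometric polynomials are dense by construction and $S(t)$ is a $C_0$-group, so both your density step and Da Prato's original argument go through; alternatively, one must check directly that the particular $f=Y(t')$ arising in Proposition \ref{PROP:stoconti} takes values a.s. in that closure. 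Without one of these additions the continuity claim is not established (the uniform bound and the estimate $\|F(t)\|_{\ft{b}^s_{p,\infty}}\leq C\|f\|_{L^{2m}}$ are fine as you prove them).
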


\begin{remark} \rm
Although Lemma 2.7 in \cite{DAPRATO1} is stated for a Hilbert space $H$,
its proof makes no use of the Hilbert space structure of $H$. 
Thus the same result  holds for $\ft{b}^s_{p, \infty}(\T)$ as well.
\end{remark}

In view of Lemma \ref{LEM:stoconti1}, 
it suffices to show that $Y(t') \in L^{2m}([0, T]; \ft{b}^s_{p, \infty}(\T))$ a.s.

\begin{proposition} \label{PROP:stoconti}
Let $T > 0$, $m \geq 2$, $s = -\frac{1}{2}+$, and $ p = 2+$ such that $sp < -1$.
Let $\phi$ be as in \eqref{PHI}.
Then, the stochastic convolution $\Phi^{(r)}(t)$  is continuous from $[0, T]$ 
into $\ft{b}^s_{p, \infty}$ almost surely.
Moreover, there exists 
\[ \mathbb{E} \Big( \sup_{t \in [0, T]} \|\Phi^{(r)} (t) \|^{2m}_{\ft{b}^s_{p, \infty}} \Big) \leq
C(m, T, s, p) < \infty.\]

\end{proposition}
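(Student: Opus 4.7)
The plan is to follow the factorization method of Da Prato together with Lemma \ref{LEM:stoconti1}. By \eqref{W3estimate2}, $\Phi^{(r)}(t)$ is expressed as $\frac{\sin\pi\alpha}{\pi}\int_0^t S(t-t')(t-t')^{\alpha-1}Y(t')\,dt'$ where $Y$ is as in \eqref{W3estimate3}. I choose $\alpha \in (\frac{1}{2m},\frac{1}{2})$, which is possible since $m\geq 2$. By Lemma \ref{LEM:stoconti1} applied pathwise in $\omega$, it suffices to show that
\[ \mathbb{E}\bigl[\|Y\|^{2m}_{L^{2m}([0,T];\,\ft{b}^s_{p,\infty})}\bigr] < \infty, \]
since the lemma simultaneously yields continuity of $\Phi^{(r)}$ in $\ft{b}^s_{p,\infty}$ and the pointwise domination $\sup_{t}\|\Phi^{(r)}(t)\|_{\ft{b}^s_{p,\infty}} \lesssim \|Y\|_{L^{2m}(...)}$.

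Taking the spatial Fourier coefficient in \eqref{W3estimate3} gives, for $n\ne 0$,
\[ \ft{Y}(n,t') = \tfrac{1}{\sqrt{2}}\int_0^{t'} e^{i(t'-r)n^3}(t'-r)^{-\alpha}\phi_n(r)\,d\beta_n^{(r)}(r). \]
Since $|\phi_n(r)|\equiv 1$, Ito isometry yields $\mathbb{E}|\ft{Y}(n,t')|^2 \leq \frac{1}{2}\int_0^{t'}(t'-r)^{-2\alpha}\,dr \leq C_\alpha T^{1-2\alpha}$ uniformly in $n$ and $t'\in[0,T]$. To pass to the $2m$-th moment I invoke the Gaussian structure: $\phi$ is independent of $\{\beta_n^{(r)},\beta_n^{(i)}\}_{n\ne 0}$, so splitting $\omega=(\omega_1,\omega_2)$ exactly as in Part 1 of the proof of Proposition \ref{PROP:stoint} makes $\ft{Y}(n,t')$ a complex Gaussian in $\omega_1$ conditionally on $\omega_2$, with conditional variance bounded deterministically. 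Standard Gaussian moment estimates then give
\[ \mathbb{E}|\ft{Y}(n,t')|^{2m} \leq C(m,T,\alpha) \quad\text{uniformly in } n\ne 0 \text{ and } t'\in[0,T]. \]

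To upgrade these coefficient bounds to a bound in $\ft{b}^s_{p,\infty}$, fix $s'' \in (s,-\frac{1}{p})$, which is possible because $sp<-1$ strictly. Since $2m/p\geq 1$, Minkowski's integral inequality applied to the sum defining $\|\cdot\|_{\ft{b}^{s''}_{p,p}}^{p}$ gives
\[ \bigl(\mathbb{E}\|Y(t')\|_{\ft{b}^{s''}_{p,p}}^{2m}\bigr)^{p/(2m)} \leq \sum_{n} \jb{n}^{s''p}\bigl(\mathbb{E}|\ft{Y}(n,t')|^{2m}\bigr)^{p/(2m)} \lesssim \sum_n \jb{n}^{s''p} < \infty, \]
where summability uses $s''p<-1$. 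Combined with the trivial embedding $\ft{b}^{s''}_{p,p}(\T)\hookrightarrow \ft{b}^s_{p,\infty}(\T)$ (immediate from $s<s''$ and $\sup_j \leq \sum_j$), this yields $\sup_{t'\in[0,T]}\mathbb{E}\|Y(t')\|_{\ft{b}^s_{p,\infty}}^{2m}\leq C$, and hence by Fubini the required moment bound on $\|Y\|_{L^{2m}([0,T];\,\ft{b}^s_{p,\infty})}$. The main technical point is handling the random operator $\phi$: the Ito isometry is effortless because $|\phi_n|\equiv 1$, but the Gaussian moment upgrade requires the $\omega_1$-$\omega_2$ conditioning trick of Proposition \ref{PROP:stoint}; once this is in place the remaining computations are routine manipulations of the factorization identity.
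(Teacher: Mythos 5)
Your proposal is correct and follows essentially the same route as the paper: the Da Prato factorization with $\alpha\in(\frac{1}{2m},\frac12)$, reduction to $Y\in L^{2m}([0,T];\ft{b}^s_{p,\infty})$ via Lemma \ref{LEM:stoconti1}, the $\omega_1$--$\omega_2$ conditioning to exploit Gaussianity of $\ft{Y}(n,t')$ given $\phi$, the Ito isometry bound $\int_0^{t'}(t'-r)^{-2\alpha}dr$, and Minkowski's inequality with $2m/p\ge 1$. The only cosmetic difference is your detour through an intermediate regularity $s''$ and the space $\ft{b}^{s''}_{p,p}$, where the paper simply replaces $\sup_j$ by $\sum_j$ at regularity $s$ itself and sums $2^{j(sp+1)}$ using $sp<-1$.
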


\begin{proof} 
Let $\al \in \big( \frac{1}{2m}, \frac{1}{2}\big)$ and $Y$ be as in \eqref{W3estimate3}.
First, note that $Y$ is real-valued since $ \phi_{-n}(s) e_{-n}  = \cj{\phi_n(s) e_n}$
and $\beta_{-n}^{(r)} = \beta_n^{(r)}$.
Note that $\{\beta^{(r)}_n\}_{n \ne 0}$ and $\phi$  are independent
since $\phi$ depends only on $\beta_0$.
Thus,  
we can regard $\beta^{(r)}_n$ and $\phi$ as $\beta^{(r)}_n(\omega) = \beta^{(r)}_n(\omega_1)$ and $\phi(\omega) = \phi(\omega_2)$, 
where $\omega = (\omega_1, \omega_2) \in \Omega_1 \times \Omega_2 = \Omega$.
Then, for each fixed $\omega_2$ and $t' \in [0, t]$, 
$\ft{Y(t')}(n)$ is a Gaussian random variable on $\Omega_1$
with $\text{Var}_{\Omega_1}\big(\ft{Y(t')}(n)\big) = 
\mathbb{E}_{\Omega_1} \big [|\ft{Y(t')}(n)|^2 \big]$. 

Let $G_n(r, \omega_2) =  (t'-r)^{-\al} e^{i(t'-r)n^3} \phi_n(r, \omega_2)$. 
Note that $|G_n(r, \omega_2)| = (t'-r)^{-\al}$ for $0 < r < t'$ and $n\ne 0$.
By Ito isometry, we have 
\begin{align*}
\mathbb{E}_{\Omega_1} \big [|\ft{Y(t')}(n)|^2 \big] 
&= \frac{1}{2} \, \mathbb{E}_{\Omega_1} \bigg[\Big|\int_0^{t'} G_n(r, \omega_2) d \beta(r, \omega_1)\Big|^2\bigg]\\
&=\frac{1}{2} \int_0^{t'} |G_n(r, \omega_2)|^2 dr \sim \int_0^{t'} (t' -r)^{-2\alpha} dr.
\end{align*}

\noi
Then, by Minkowski  integral inequality (with $p = 2+ < 2m$) after replacing $\sup_j$ by $\sum_j$, we have
\begin{align*}
\mathbb{E}_{\Omega_1}  \big( \|Y(t', \cdot, & \omega_2)\|_{\ft{b}^s_{p, \infty}}^{2m}\big)
= \mathbb{E}_{\Omega_1} \Big[ \Big( \sup_j \sum_{|n|\sim 2^j} \jb{n}^{sp}  
|\ft{Y(t')}(n)|^p \Big)^\frac{2m}{p}\Big] \\
& \lesssim  \bigg( \sum_{j=0}^\infty \sum_{|n|\sim 2^j} 2^{jsp}  
 \Big(\mathbb{E}_{\Omega_1} \big [|\ft{Y(t')}(n)|^{2m} \big]\Big)^\frac{p}{2m} \bigg)^\frac{2m}{p} \\
& \sim \Big( \sum_{j=0}^\infty  2^{j(sp+1)}  \Big)^\frac{2m}{p}  
\bigg(\int_0^{t'} (t'-r)^{-2\al}  dr\bigg)^m 
 \lesssim \bigg(\frac{(t')^{1-2\alpha}}{1-2\alpha}\bigg)^m,
\end{align*}

\noi
since $sp < -1$.
Therefore, we have
\begin{align*}
\int_0^T \mathbb{E} \big( & \| Y(t') \|_{\ft{b}^s_{p, \infty}}^{2m} \big) dt' = 
\int_0^T \mathbb{E}_{\Omega_2} \mathbb{E}_{\Omega_1} \big( \|Y(t')\|_{\ft{b}^s_{p, \infty}}^{2m} \big) dt'\\
& \lesssim  \int_0^T \bigg( \frac{(t')^{1-2\al}} {1- 2\al} \bigg)^m dt' 
 \lesssim T^{(1-2\al) m + 1} < C(m, T, s, p) < \infty.
\end{align*}

\noindent
In particular, it follows that $Y(\cdot, \omega) \in L^{2m} ([0, T]; \ft{b}^s_{p, \infty})$ almost surely.
Then, the desired result follows from Lemma \ref{LEM:stoconti1}.
\end{proof}

\section{Nonlinear Estimate on the Second Iteration}
Now, we present the crucial nonlinear analysis.
First, we briefly go over Bourgain's argument in \cite{BO3}.
By writing the integral equation, the deterministic KdV \eqref{KDV} is equivalent to
\begin{equation} \label{KDVduhamel}
u(t) =  S(t) u_0 -\tfrac{1}{2} \mathcal{N}(u, u) (t), 
\end{equation} 

\noi
where $\mathcal{N}(\cdot, \cdot)$  is  given by
\begin{equation} \label{NN}
\mathcal{N}(u_1, u_2) (t) :=  \int_0^t S(t - t') \dx (u_1 u_2)(t') d t'.
\end{equation}

In the following, we assume that the initial condition $u_0$ has the mean 0,
which implies that $u(t)$ has the spatial mean 0 for each $t\in \R$.
We use $(n, \tau)$, $(n_1, \tau_1)$, and $(n_2, \tau_2)$ to denote the Fourier variables
for $uu$, the first factor, and the second factor $u$ of $uu$ in $\mathcal{N}(u, u)$, 
respectively.
i.e. we have $n = n_1 + n_2$ and $\tau = \tau_1 + \tau_2$.
By the mean 0 assumption on $u$ and 
by the fact that we have $\dx (uu)$ in the definition of $\mathcal{N}(u,u)$, 
we assume  $n, n_1, n_2 \ne 0$.
We also use the following notation:
\[\s_0 := \jb{\tau - n^3} \text{ and } \s_j := \jb{\tau_j - n_j^3}.\]

\noi
One of the main ingredients is the observation due to Bourgain \cite{BO1}:
\begin{equation} \label{Walgebra}
n^3 - n_1^3 - n_2^3 = 3 n n_1 n_2, \ \text{for } n = n_1 + n_2,
\end{equation}

\noi
which in turn implies that 
\begin{equation} \label{MAXMAX}
\MAX:= \max( \s_0, \s_1, \s_2) \gtrsim \jb{n n_1 n_2}.
\end{equation}

Now, define 
\begin{equation} \label{AJJ}
A_j = \{(n, n_1, n_2, \tau, \tau_1, \tau_2) \in \mathbb{Z}^3 \times \R^3:
\s_j = \MAX\},
\end{equation}

\noi
and let $\mathcal{N}_j(u, u)$ denote the contribution of $\mathcal{N}(u, u)$ on $A_j$.
By the standard bilinear estimate as in \cite{BO1}, \cite{KPV4}, 
we have
\begin{align} \label{N_0}
\|\mathcal{N}_0(u, u)\|_{{-\frac{1}{2} + \dl, \frac{1}{2}-\dl}}
\leq o(1)\|u\|^2_{{-\frac{1}{2} - \dl, \frac{1}{2}-\dl}},
\end{align}

\noi
where $o(1) = T^\theta$ with some $\theta > 0$ by considering the estimate 
on a short time interval $[-T, T]$ (e.g. Lemma \ref{LEM:timedecay}).
See (2.17), (2.26), and (2.68) in \cite{BO3}.
Here, we abuse the notation and use $\|\cdot\|_{s, b} = \|\cdot\|_{X^{s, b}}$ 
to denote the local-in-time version as well.
Note that the temporal regularity $b = \frac{1}{2} - \dl < \frac{1}{2}$.
This allowed us to gain the spatial regularity by $2\dl$.
Clearly, we can not expect to do the same for $\mathcal{N}_1(u, u)$.
(By symmetry, we do not consider $\mathcal{N}_2(u, u)$ in the following.)
The bilinear estimate \eqref{KPVbilinear} is known to fail for any $s \in \R$
if $b < \frac{1}{2}$ due to the contribution from $\mathcal{N}_1(u, u)$. 
See \cite{KPV4}.
Following the notation in \cite{BO3}, 
let 
\begin{equation}\label{Isb}
I_{s, b} = \|\mathcal{N}_1(u, u) \|_{X^{s, b}} \
\text{ and }\ \al := \frac{1}{2} -\dl < \frac{1}{2}.
\end{equation}

\noi
Then, by Lemma \ref{LEM:linear2} and duality 
with $\|d(n, \tau)\|_{L^2_{n, \tau}} \leq 1$, we have 
\begin{align} \label{eq:I1}
I_{-\al, 1-\al} & = \|\mathcal{N}_1(u, u) \|_{-\al, 1-\al}\\
& \lesssim \sum_{\substack{n, n_1\\n = n_1 + n_2}} \intt_{\tau = \tau_1 + \tau_2} d\tau d\tau_1
\frac{\jb{n}^{1-\al}d(n, \tau)}{\s_0^{\al }} \ft{u}(n_1, \tau_1) \frac{\jb{n_2}^{1-\al}c(n_2, \tau_2)}{\s_2^\al},
\notag
\end{align}

\noi
where  
\begin{equation} \label{CN}
c(n_2, \tau_2) = \jb{n_2}^{-(1-\al)}\s_2^{\al}\, \ft{u}(n_2, \tau_2) 
\text{ so that }
\|c\|_{L^2_{n, \tau}} = \|u\|_{-(1-\al), \al} = \|u\|_{-\frac{1}{2}-\dl, \frac{1}{2}-\dl}.
\end{equation}

\noi
The main idea here is to consider the second iteration, 
i.e. substitute \eqref{KDVduhamel} for  $\ft{u}(n_1, \tau_1)$ in \eqref{eq:I1}, 
thus leading to a trilinear expression.
Since $\s_1 = \MAX \gtrsim \jb{n n_1n_2}\gg1$ on $A_1$, 
we can assume that 
\begin{equation} \label{eq:u_1}
\ft{u}(n_1, \tau_1) = \big(\mathcal{N}(u, u)\big)^\wedge(n_1, \tau_1)
\sim \frac{|n_1|}{\s_1} \sum_{n_1 = n_3 + n_4} \intt_{\tau_1 = \tau_3 + \tau_4}
\ft{u}(n_3, \tau_3)\ft{u}(n_4, \tau_4) d\tau_4.
\end{equation}

\noi
Note that $\ft{u}(n_1, \tau_1)$ can not come from $S(t) u_0$  of \eqref{KDVduhamel}
since we have $\s_1 \sim 1$ for the linear part.
Moreover, by the standard computation \cite{BO1}, we have
\begin{align} \label{Duhamel}
 \mathcal{N}(u, u)(x, t) 
& = -i \sum_{k= 1}^\infty \frac{i^k t^k}{k!}  
\sum_{n \ne 0} e^{i(nx + n^3t)} \int \eta(\ld - n^3) \ft{\dx u^2}(n, \ld) d\ld \notag \\
& \hphantom{X}+ i \sum_{n \ne 0} e^{inx} \int 
\frac{\big(1-\eta\big)(\tau - n^3)}{\tau - n^3}  
\ft{\dx u^2}(n, \tau) e^{i \tau t } d \tau \notag \\
& \hphantom{X}+ i  \sum_{n \ne 0} e^{i(nx + n^3 t)} \int 
\frac{\big(1-\eta\big)(\ld - n^3)}{\ld - n^3}  \ft{\dx u^2}(n, \ld) d \ld \notag \\
& =:  \mathcal{M}_1(u, u)(x, t) + \mathcal{M}_2(u, u)(x, t)+ \mathcal{M}_3(u, u)(x, t).
\end{align}

\noi
Note that 
$(\mathcal{M}_1(u, u))^\wedge(n_1, \tau_1)$
and $(\mathcal{M}_3(u, u))^\wedge(n_1, \tau_1)$
are distributions supported on $\{\tau_1 -n_1^3 = 0\}$.
i.e. $\s_1 \sim 1$.
Hence, the only contribution for the second iteration on $A_1$ comes from 
$\mathcal{M}_2(u, u)$
whose Fourier transform is given in 
\eqref{eq:u_1}.
This shows the validity of the assumption \eqref{eq:u_1}.

Note that the $\s_1 $ appearing in the denominator
allows us to cancel $\jb{n}^{1-\al}$ and $\jb{n_2}^{1-\al}$ in the numerator in \eqref{eq:I1}.
Then, $I_{-\al, 1-\al}$ can be estimated by 
\begin{align} \label{eq:I2}
\lesssim \sum_{\substack{n = n_1 + n_2\\n_1 = n_3 + n_4}} 
\intt_{\substack{\tau = \tau_1 + \tau_2\\\tau_1 = \tau_3 + \tau_4}} 
\frac{\jb{n}^{1-\al}d(n, \tau)}{\s_0^{\al }} \frac{|n_1|}{\s_1} \, \ft{u}(n_3, \tau_3)\ft{u}(n_4, \tau_4)
\frac{\jb{n_2}^{1-\al}c(n_2, \tau_2)}{\s_2^\al}
.
\end{align}

\noi
Then, Bourgain divided the argument into several cases, 
depending on the sizes of $\s_0, \cdots, \s_4$.
Here, the key algebraic relation is
\begin{equation}\label{algebra2}
n^3 - n_2^3 - n_3^3 - n_4^3 = 3(n_2+ n_3)(n_3+ n_4)(n_4+ n_2), \  \text{ with } n = n_2 + n_3 + n_4.
\end{equation}

\noi
Then, Bourgain proved -see (2.69) in \cite{BO3}-
\begin{equation} \label{N_1}
I_{-\al, 1-\al} \leq o(1)\|u\|_{-(1-\al), \al} I_{-\al, 1-\al}  
+ o(1) \|u\|^3_{-(1-\al), \al} + o(1) \|u\|_{-(1-\al), \al},
\end{equation}

\noi
{\it assuming} the a priori estimate \eqref{BOO}: $|\ft{u}(n, t)| <C$ for all $n\in \mathbb{Z}$, $t \in\R$.
Indeed, the estimates involving the first two terms on the right hand side of  \eqref{N_1}
were obtained without \eqref{BOO}, and
{\it only} the last term in \eqref{N_1} required \eqref{BOO}, 
-see ``Estimation of (2.62)'' in \cite{BO3}-, 
which was then used to deduce
\begin{equation} \label{eq:apriori}
\|\ft{u}(n, \cdot)\|_{L^2_\tau} < C.
\end{equation}

\noi
The a priori estimate \eqref{BOO} is derived via
the isospectral property of the KdV flow
and is false for a general function in $X^{-(1-\al), \al}$.
(It is here that the smallness of the total variation $\|\mu\|$ is used.)

\medskip

Our goal is to carry out a similar analysis for SKdV \eqref{SKDV1} on the second iteration 
{\it without} the a priori estimates \eqref{BOO} and \eqref{eq:apriori} coming from the complete integrability of KdV.
We achieve this goal by considering the estimate
in $X^{-\al, \al}_{p, 2} = X^{-\frac{1}{2}+ \dl, \frac{1}{2}-\dl}_{p, 2}$, 
where $p = 2+$ and $\frac{p-2}{4p} < \dl< \frac{p-2}{2p}$.
By \eqref{EMBED1} and \eqref{EMBED3}
(recall $-\al = -\frac{1}{2} +\dl$ and $-(1-\al) = -\frac{1}{2} -\dl$), we have
\begin{equation} \label{EMBED5}
\|u\|_{X^{-\al, \al}_{p, 2}} \leq \|u\|_{X^{-\al, \al}},
\text{ and } \
\|u\|_{X^{-(1-\al), \al}} \lesssim \|u\|_{X^{-\al, \al}_{p, 2}}.
\end{equation}

\noi
Then, it follows from \eqref{N_0} and \eqref{EMBED5} that 
\begin{equation} \label{Np0}
\|\mathcal{N}_0(u, u)\|_{X^{-\al, \al}_{p, 2}} \leq o(1) \|u\|^2_{X^{-\al, \al}_{p, 2}}.
\end{equation}

Now, we consider the estimate on $\|\mathcal{N}_1(u, u) \|_{X^{-\al, \al}_{p, 2}}$.
From \eqref{EMBED5} and $\al < 1-\al$, it suffices to control $I_{-\al, 1-\al}$.
As in the deterministic case, we consider the second iteration, 
and substitute \eqref{duhamel1} for  $\ft{u}(n_1, \tau_1)$ in \eqref{eq:I1}.
As before, there is no contribution from $S(t) u_0$, 
or $\mathcal{M}_1(u, u)$, $\mathcal{M}_3(u, u)$
defined  in \eqref{Duhamel}.
Now, there are two contributions:

\begin{itemize}
\item[(i)] $\mathcal{N}_1(\mathcal{M}_2(u, u), u)$ from the deterministic nonlinear part: 
In this case, we can use the estimates from \cite{BO3} 
{\it except} when the a priori bound \eqref{BOO} was assumed.
i.e. we need to estimate the contribution from (2.62) in \cite{BO3}:
\begin{equation}\label{262}
R_\al := 
\sum_{n} \intt_{\tau = \tau_2 +  \tau_3 + \tau_4} 
\chi_B \frac{d(n, \tau)}{\jb{n}^{1+\al}\s_0^{\al }} 
 \ft{u}(-n, \tau_2) \ft{u}(n, \tau_3)\ft{u}(n, \tau_4)d\tau_2 d\tau_3d\tau_4,
\end{equation}

\noi
where $\|d(n, \tau)\|_{L^2_{n, \tau}} \leq 1$
and $B = \{ \s_0, \s_2, \s_3, \s_4 < |n|^\g\}$ with some small parameter $\g>0$. 
Note that this corresponds to the case $n_2 = -n$ and $n_3 = n_4 = n$
in \eqref{eq:I2} after some reduction.
In our analysis, we directly estimate $R_\al$ 
in terms of $\|u\|_{X^{-\al, \al}_{p, 2}}$.
The key observation is that 
we can take the spatial regularity $s = -\al $ to be greater than $-\frac{1}{2}$ 
by choosing $p > 2$.

\item[(ii)] $\mathcal{N}_1(\Phi, u)$ from the stochastic convolution $\Phi$ in \eqref{stoconv}: 
In view of \eqref{EMBED5}, we estimate 
\begin{equation} \label{EPhi}
\mathbb{E}\big[\|\mathcal{N}_1(\eta \Phi, u)\|_{X^{-\al, 1- \al}}\big]
\end{equation}
via the stochastic analysis from Section 4.
\end{itemize}  

\begin{remark} \rm
In fact, we do not need to take an expectation in \eqref{EPhi}
since we establish local well-posedness pathwise in $\omega$, i.e. for almost every {\it fixed} $\omega$.
Nonetheless, we estimate \eqref{EPhi} with the expectation
since it shows how $F^N_1$ and $F^N_2$ defined in \eqref{F_N}
arise along with their estimates.
\end{remark}

\medskip

\noi
$\bullet$ {\bf Estimate on (i):} 
In \cite{BO3}, the parameter $\g = \g(\al)$,  subject to the conditions (2.43) and (2.60) in \cite{BO3},  
played a certain role in estimating $R_\al$
along with the a priori bound \eqref{BOO}.
However, it plays no role in our analysis.
By Cauchy-Schwarz and Young's inequalities, we have
\begin{align*}
\eqref{262} & \leq \sum_n \| d(n, \cdot)\|_{L^2_\tau}
\jb{n}^{-1-\al} \| \ft{u}(-n, \tau_2)\|_{L^{\frac{6}{5}}_{\tau_2}}
\| \ft{u}(n, \tau_3)\|_{L^{\frac{6}{5}}_{\tau_3}}
\| \ft{u}(n, \tau_4)\|_{L^{\frac{6}{5}}_{\tau_4}}
\intertext{By H\"older inequality (with appropriate $\pm$ signs) and the fact that $-1-\al < -3\al$,}
 & \leq \sum_n \| d(n, \cdot)\|_{L^2_\tau}
\prod_{j = 2}^4 \jb{n}^{-\al-}\|\s_j^{-\al}\|_{L^3_{\tau_j}}
\| \s_j^{\al} \ft{u}(\pm n, \tau_j)\|_{L^{2}_{\tau_j}}\\
 & \leq \| d(\cdot, \cdot)\|_{L^2_{n, \tau}}
\|u\|_{X^{-\al, \al}_{6, 2}}^3
\leq \|u\|_{X^{-\al, \al}_{p, 2}}^3,
\end{align*}

\noi
where the last two inequalities follow by choosing  $\al > \frac{1}{3}$ and $p = 2+ <6$.

\medskip

\noi
$\bullet$ {\bf Estimate on (ii):} 
We use the notation from the proof of Proposition \ref{PROP:stoint}.
It follows from \eqref{GNT2} and $\eta(t) \Phi(\cdot, t) = S(t) g(\cdot, t)$
that 
\begin{align*}
(\eta \Phi)^\wedge(n_1, \tau_1) = \ft{g}(n_1, \tau_1 - n_1^3) 
= \tfrac{1}{\sqrt{2}}  \I^{(1)}_{n_1}(\tau_1 - n_1^3) + 
\tfrac{1}{\sqrt{2}}\I^{(2)}_{n_1}(\tau_1 - n_1^3).
\end{align*}

\noi
Recall that $\s_1 = \jb{\tau_1 - n_1^3} \gtrsim \jb{n n_1 n_2}$.
Also, recall from the proof of Proposition \ref{PROP:stoint}
that $|\phi_{n_1}(r)| = \chi_{[0, T]}(r)$ is independent of $\omega$.

\smallskip
\noi
$\circ$ Contribution from $\I^{(1)}_{n_1}(\tau_1 - n_1^3)$:
From  \eqref{eq:I1} with \eqref{G12}, \eqref{GNT2}, and \eqref{Partialinteg},
we estimate \eqref{EPhi}  by 
\begin{align} \label{I1}
& \lesssim \mathbb{E} \bigg[\sum_{\substack{n, n_1\\n = n_1 + n_2}} \intt_{\tau = \tau_1 + \tau_2} d\tau d\tau_1
\frac{\jb{n}^{1-\al}d(n, \tau)}{\s_0^{\al }} 
\frac{1}{\s_1^2}\int_0^T |\phi_{n_1}(r)| d\beta_{n_1}(r)
\frac{\jb{n_2}^{1-\al}c(n_2, \tau_2)}{\s_2^\al}  \bigg]
\intertext{By Cauchy-Schwarz inequality in $\omega$ and Ito isometry,}
& \lesssim  \sum_{\substack{n, n_1\\n = n_1 + n_2}} \intt_{\tau = \tau_1 + \tau_2} d\tau d\tau_1
\frac{d(n, \tau)}{\s_0^{\al }} 
\frac{\|\phi_{n_1}\|_{L^2[0, T]}}{\s_1^{\frac{3}{2} -\dl} \jb{n_1}^{\frac{1}{2}+\dl}}
\frac{\|c(n_2, \tau_2)\|_{L^2(\Omega)}}{\s_2^\al}   \label{I11}
\intertext{By $L^4_{x, t}, L^2_{x, t}, L^4_{x, t}$-H\"older inequality along 
with Lemma \ref{LEM:L4}, \eqref{EMBED2}, \eqref{phiphi}, \eqref{CN},  and \eqref{EMBED5}}
&  \lesssim T^\theta \|d\|_{L^2_{n, \tau}} \|\phi\|_{L^2([0, T];H^{-\frac{1}{2}-\dl})}
\|c\|_{L^2(\Omega;L^2_{n, \tau})}
\leq T^\theta \|\phi\|_{L^p([0, T];\ft{b}^{-\al}_{p, \infty})} 
\|u\|_{L^2(\Omega;X^{-(1-\al), \al})} \notag \\
& \lesssim T^\theta \|\phi\|_{L^p([0, T];\ft{b}^{-\al}_{p, \infty})} 
\|u\|_{L^2(\Omega;X^{-\al, \al}_{p, 2})}. \notag
\end{align}

\begin{remark} \rm
Strictly speaking, we need to take the supremum over $\{\|d\|_{L^2_{n, \tau}} =1\}$ 
{\it inside} the expectation in \eqref{I1}.
However, we do not worry about this issue for simplicity of the presentation,
since we have 
\begin{align*}
 \eqref{EPhi} & \leq \|\mathcal{N}_1(\eta \Phi, u)\|_{L^2(\Omega;X^{-\al, 1-\al})} \\
 & \leq  \bigg( \sum_n \int \frac{\jb{n}^{2-2\al}}{\s_0^{2\al }}  
 \mathbb{E} \Big| \int_0^T |\phi_{n_1}(r)| \sum_{n = n_1 + n_2} \intt_{\tau = \tau_1 + \tau_2}
\frac{\jb{n_2}^{1-\al}c(n_2, \tau_2)}{\s_1^2\s_2^\al} d\tau_1  d\beta_{n_1}(r)  \Big|^2 d\tau \bigg)^\frac{1}{2}  \\
& =  \sup_{\|d\|_{L^2_{n, \tau}} =1} \eqref{I11}
\end{align*}

\noi
by Ito isometry. 
Also, 
recall that we have
$\I^{(1)}_{n_1}(\tau_1 - n_1^3) = \int_0^T G^{(1)}_{n_1}(r, \tau_1 -n_1^3) d \beta_{n_1}(r)$
where $G^{(1)}_n(r, \tau)$ is defined in \eqref{G12}.
Hence, strictly speaking,  we should replace $G^{(1)}_{n_1}(r, \tau_1 -n_1^3)$ by $\s_1^{-2} |\phi_{n_1}(r)|$
in \eqref{I1} only after the application of Ito isometry.
Once again, we do not worry about this issue
for simplicity of the presentation.
The same remark applies in the following as well.

\end{remark}

\smallskip
\noi
$\circ$ Contribution from $\I^{(2)}_{n_1}(\tau_1 - n_1^3)$:

First, suppose that $\max(\s_0, \s_2) \gtrsim \jb{n n_1 n_2}^{\frac{1}{100}}$.
Say $\s_0 \geq \jb{n n_1 n_2}^{\frac{1}{100}}$.
Then, 
\eqref{EPhi} is estimated by 
\begin{align} \label{II2} 
& \lesssim \mathbb{E} \bigg[ \sum_{\substack{n, n_1\\n = n_1 + n_2}} \intt_{\tau = \tau_1 + \tau_2} d\tau d\tau_1
\frac{\jb{n}^{1-\al}d(n, \tau)}{\s_0^{\al }} 
\frac{1}{\s_1} \int_0^T|\phi_{n_1}(r)| d \beta_{n_1}(r)
\frac{\jb{n_2}^{1-\al}c(n_2, \tau_2)}{\s_2^\al} \bigg] \notag\\
& \lesssim  \sum_{\substack{n, n_1\\n = n_1 + n_2}} \intt_{\tau = \tau_1 + \tau_2} d\tau d\tau_1
\frac{d(n, \tau)}{\s_0^{\al -200\dl }} 
\frac{\|\phi_{n_1}\|_{L^2[0, T]}}{\s_1^{\frac{1}{2} +\dl} \jb{n_1}^{\frac{1}{2}+\dl}}
\frac{\|c(n_2, \tau_2)\|_{L^2(\Omega)}}{\s_2^\al}  
\end{align}

\noi
Then, we can conclude this case as before
by $L^4_{x, t}, L^2_{x, t}, L^4_{x, t}$-H\"older inequality as long as $\al - 200\dl > \frac{1}{3}$, which can be guaranteed by
taking $\dl> 0$ sufficiently small, or equivalently, taking $p > 2$ sufficiently close to 2.

Hence, assume $\max(\s_0, \s_2) \ll \jb{n n_1 n_2}^{\frac{1}{100}}$.
Recall the following lemma from \cite[(7.50) and Lemma 7.4]{CKSTT4}.

\begin{lemma} \label{LEM:closetocurve}
Let 
\begin{equation} \label{OMG} 
 \Omega(n) = \{ \eta \in \R : \eta = -3 n n_1 n_2 + o(\jb{n n_1 n_2}^\frac{1}{100}) \text{ for some } n_1 \in \mathbb{Z} 
\text{ with } n = n_1 + n_2 \}. 
\end{equation}

\noindent
Then, we have
\begin{equation} \label{closetocurve}
 \int \jb{\tau - n^3}^{-\frac{3}{4}} \chi_{\Omega(n)} (\tau - n^3) d \tau \lesssim 1. 
 \end{equation}
\end{lemma}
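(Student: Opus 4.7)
My plan is to change variables $\xi = \tau - n^3$, reducing the claim to $\int \jb{\xi}^{-3/4} \chi_{\Omega(n)}(\xi) \, d\xi \lesssim 1$ uniformly in $n$, and then cover $\Omega(n)$ by its constituent intervals. By definition, $\Omega(n) = \bigcup_{n_1 \in \mathbb{Z}} I_{n_1}$, where $I_{n_1}$ is an interval centered at $-3 n n_1 n_2$ (with $n_2 := n - n_1$) of length $\lesssim \jb{n n_1 n_2}^{1/100}$. Since we only need an upper bound, the pointwise cover $\chi_{\Omega(n)} \leq \sum_{n_1} \chi_{I_{n_1}}$ is lossless for our purpose and sidesteps any overlap among the intervals.

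The single-interval estimate is routine: if the center $-3 n n_1 n_2$ dominates the half-length, then $|\xi| \sim |n n_1 n_2|$ throughout $I_{n_1}$, giving
\[
\int_{I_{n_1}} \jb{\xi}^{-3/4} \, d\xi \lesssim \jb{n n_1 n_2}^{-3/4} \cdot \jb{n n_1 n_2}^{1/100} = \jb{n n_1 n_2}^{-3/4 + 1/100};
\]
the complementary case $|n n_1 n_2| = O(1)$ gives the same bound trivially. It therefore suffices to prove
\[
\sum_{n_1 \in \mathbb{Z}} \jb{n n_1 n_2}^{-3/4 + 1/100} \lesssim 1
\]
uniformly in $n \neq 0$. (The case $n = 0$ does not arise in the application because of the mean-zero assumption on $u$, and is in any event trivial since $\Omega(0)$ is contained in an $O(1)$-neighborhood of the origin.)

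For this sum I would split by the size of $n_1$ relative to $n$. When $|n_1| \lesssim |n|$ one has $|n n_1 n_2| \gtrsim |n|^2 \min(|n_1|, |n_2|)$, so the partial sum is controlled by $\sum_{k=1}^{|n|} (|n|^2 k)^{-3/4 + 1/100} \lesssim |n|^{-5/4 + 3/100}$. When $|n_1| \gg |n|$ one has $|n_2| \sim |n_1|$, hence $|n n_1 n_2| \sim |n| n_1^2$, and the tail sum $\sum_{|n_1| \gg |n|} (|n| n_1^2)^{-3/4 + 1/100}$ again yields $|n|^{-5/4 + 3/100}$. Both pieces are $O(1)$ uniformly in $n$, finishing the proof.

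The main obstacle is really only bookkeeping of the various size regimes for $n_1$. The one mildly delicate spot is $n_1 \approx n/2$, where the centers $-3 n n_1 n_2$ cluster and the intervals $I_{n_1}$ may heavily overlap; but this cluster is harmless because (i) our cover $\sum_{n_1} \chi_{I_{n_1}}$ ignores overlaps, and (ii) at those indices $|n n_1 n_2| \sim |n|^3$ is maximal, so the per-interval weight is as small as $|n|^{-9/4 + 3/100}$, and even a blunt sum over $O(|n|)$ such indices decays in $|n|$. No algebraic identity deeper than $n^3 - n_1^3 - n_2^3 = 3 n n_1 n_2$ (implicit in the definition of the centers) enters; the argument is Lebesgue measure plus elementary $\ell^1$ summation.
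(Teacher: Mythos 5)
Your proof is correct, and it is worth noting that the paper does not actually prove this lemma at all: it cites \cite[(7.50) and Lemma 7.4]{CKSTT4} and merely remarks that the exponent $-1$ appearing there can be relaxed to $-\beta$ for any $\beta > \frac{2}{3}+\frac{1}{100}$ upon inspection of that proof. What you have written is a complete, self-contained, elementary substitute: cover $\Omega(n)$ by the intervals $I_{n_1}$, bound each $\int_{I_{n_1}}\jb{\xi}^{-3/4}\,d\xi$ by $\jb{n n_1 n_2}^{-3/4+1/100}$ (using that the half-length $\jb{n n_1 n_2}^{1/100}$ is dominated by the distance $3|n n_1 n_2|$ of the center from the origin once $n n_1 n_2\neq 0$), and sum. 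Your case analysis is sound: for $|n_1|\lesssim |n|$ there are $O(1)$ values of $n_1$ for each value of $\min(|n_1|,|n_2|)$, giving $\sum_{k\le |n|}(|n|^2k)^{-3/4+1/100}\lesssim |n|^{-5/4+3/100}$, and for $|n_1|\gg|n|$ the tail $\sum |n_1|^{-3/2+2/100}$ converges since $-\frac32+\frac{2}{100}<-1$; the degenerate indices $n_1\in\{0,n\}$ contribute $O(1)$ and in any case do not occur in the application. As a side observation, your argument actually closes for any exponent $\beta>\frac12+\frac{1}{100}$ (the binding constraint being convergence of the large-$|n_1|$ tail, which requires $2\beta-\frac{2}{100}>1$), which is slightly stronger than the range $\beta>\frac23+\frac{1}{100}$ asserted in the paper's remark; for the lemma as stated only $\beta=\frac34$ is needed, so both suffice.
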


\noi
Note that 
\eqref{closetocurve} is stated with $\jb{\tau - n^3}^{-1}$ in \cite{CKSTT4}.
However, by examining the proof of Lemma 7.4 in \cite{CKSTT4}, 
one immediately sees that \eqref{closetocurve} is valid 
with $\jb{\tau - n^3}^{-\beta}$ for any $\beta > \frac{2}{3}+\frac{1}{100}$.

\smallskip

Then, 
\eqref{EPhi} is estimated by 
\begin{align*} 
& \lesssim \mathbb{E} \bigg[ \sum_{\substack{n, n_1\\n = n_1 + n_2}} \intt_{\tau = \tau_1 + \tau_2} d\tau d\tau_1
\frac{\jb{n}^{1-\al}d(n, \tau)}{\s_0^{\al }} 
\frac{\chi_{\Omega(n_1)}(\tau_1-n_1^3)}{\s_1}\int_0^T |\phi_{n_1}(r)| d\beta_{n_1}(r) 
\frac{\jb{n_2}^{1-\al}c(n_2, \tau_2)}{\s_2^\al} \bigg] 
\end{align*}

\noi
By Cauchy-Schwarz inequality and Ito isometry,
\begin{align} \label{I2}
 \lesssim  \sum_{\substack{n, n_1\\n = n_1 + n_2}} \intt_{\tau = \tau_1 + \tau_2} d\tau d\tau_1
\frac{d(n, \tau)}{\s_0^{\al }} 
\frac{\chi_{\Omega(n_1)}(\tau_1-n_1^3) \|\phi_{n_1}\|_{L^2[0, T]}}{\s_1^{\frac{1}{2} -\dl} \jb{n_1}^{\frac{1}{2}+\dl}}
\frac{\|c(n_2, \tau_2)\|_{L^2(\Omega)}}{\s_2^\al}   
\end{align}

\noi
By $L^4_{x, t}, L^2_{x, t}, L^4_{x, t}$-H\"older inequality along 
with Lemmata \ref{LEM:L4}, \ref{LEM:closetocurve}, \eqref{EMBED2}, \eqref{phiphi}, 
\eqref{CN},  and \eqref{EMBED5}, 
\begin{align*}  
& \lesssim T^\theta \|d\|_{L^2_{n, \tau}}
\big\| \jb{n_1}^{-\frac{1}{2}-\dl} \|\phi_{n_1}\|_{L^2[0, T]}
\|\chi_{\Omega(n_1)}(\tau_1-n_1^3) 
\s_1^{-\frac{1}{2}+\dl}\|_{L^2_\tau} \big\|_{L^2_n}
\|c\|_{L^2(\Omega;L^2_{n, \tau})}\\
& \leq T^\theta \|\phi\|_{L^2([0, T];H^{-\frac{1}{2}-\dl})} \|u\|_{L^2(\Omega;X^{-(1-\al), \al})} 
\lesssim T^\theta \|\phi\|_{L^p([0, T];\ft{b}^{-\al}_{p, \infty})} 
\|u\|_{L^2(\Omega;X^{-\al, \al}_{p, 2})}. \notag 
\end{align*}

\medskip

 Now, we are ready to prove Theorem \ref{THM1}.

\begin{proof}[Proof of Theorem \ref{THM1}]
 
Fix mean zero $u_0 \in \ft{b}^{-\al'}_{p, \infty}(\T)$ and $\phi$ as in \eqref{PHI},
where $\al' = \frac{1}{2}-\dl-$ with $\frac{p-2}{4p} < \dl < \frac{p-2}{2p}$
such that $(-\al')p < -1$.
Consider sequences of initial data $u_0^N \in L^2(\T)$ and 
diagonal covariance operator $\phi^N \in HS(L^2;L^2)$, given by
\begin{equation} \label{APPROX}
u^N_0 = \mathbb{P}_{\leq N} u_0 = \sum_{|n| \leq N} \ft{u}_0(n) e^{inx}
\text{ and } \phi^N (t, \omega):= \text{diag} (\phi_n (t, \omega) ; 0< |n| \leq N)
\end{equation}

\noi
where $\phi_n$ is given in \eqref{PHI}.
Now, fix $\al = \frac{1}{2}-\dl > \al'$ as in \eqref{Isb}.
Note that such $u^N_0$ converges to $u_0$ in $\mathcal{F}L^{-\al, p}(\T)$,
and thus in $\ft{b}^{-\al}_{p, \infty}(\T)$.
Also, $\phi^N$ converges to $\phi$ in $\mathcal{F}L^{-\frac{1}{2}-, p}(\T)$
for each $t$ and $\omega$, 
and thus in $\ft{b}^{-\frac{1}{2}-}_{p, \infty}(\T)$.
Then, by Monotone Convergence Theorem, 
$\phi^N$ converges to $\phi$ 
in $L^p([0, 1]\times \Omega;\ft{b}^{-\frac{1}{2}-}_{p, \infty}) $.
(Indeed, the convergence is in $L^\infty([0, 1]\times \Omega; \ft{b}^{-\frac{1}{2}-}_{p, \infty})$, 
since we have $|\phi_n(t, \omega)| = 1$ for all $n$, independent of $t \in \R$ and $\omega \in \Omega$.)
Note that a slight loss of the regularity $-\al < -\al'$
was necessary since $u^N_0$ defined in \eqref{APPROX}
does not necessarily converge to $u_0$ in $\ft{b}^{-\al'}_{p, \infty}(\T)$
due to the $L^\infty$ nature of the norm over the dyadic blocks.
We can avoid such a loss of the regularity if we start with
$u_0 \in \mathcal{F}L^{s, p}(\T)$.

Now, let $\G^N = \G^N_{u^N_0}$ be the map defined by
\begin{equation} \label{GAMMA}
\G^N v = \G^N_{u^N_0}v := S(t) u^N_0 - \tfrac{1}{2}\mathcal{N}(v, v) + \eta \Phi^N,
\end{equation}

\noi
where $\Phi^N$ is the stochastic convolution defined in \eqref{stoconv} 
with the covariance operator $\phi^N$.
By the well-posedness result in \cite{DDT1}, 
there exists a unique global solution $u^N \in L^\infty(\R^+;L^2(\T))
\cap C(\R^+;B^{0-}_{2, 1}(\T))$ a.s.
to \eqref{GAMMA} for each $N$
since $\phi^N \in HS(L^2;L^2)$.

Now, we put all the estimates together.
Note that all the implicit constants are independent of $N$.
Also, when there is no superscript $N$, 
it means that $N = \infty$.
From Lemma \ref{LEM:linear1}, we have 
\begin{equation} \label{WT0}
\| S(t) u^N_0\|_{X^{s, b, T}_{p, 2}} \leq C_1 \|u^N_0\|_{\ft{b}^{s}_{p, \infty}}
\end{equation}

\noi
for any $s, b\in \R$ with $C_1 =  C_1(b)$.
In particular,  by taking $b > \frac{1}{2}$, we see that 
$S(t) u_0$ is continuous on $[0, T]$ with values in $ \ft{b}^{s}_{p, \infty}$.
Also, by taking $b < \frac{1}{2}$, we gain a power of $T$.
From the definition of $\mathcal{N}_j(\cdot, \cdot)$ and \eqref{Np0},
we have
\begin{align}  \label{WT1}
\|\mathcal{N}(u^N, u^N)\|_{X^{-\al, \al, T}_{p, 2}}
\leq C_2 T^{\theta_1} \|u^N\|^2_{X^{-\al, \al, T}_{p, 2}}
+ 2 \|\mathcal{N}_1(u^N, u^N)\|_{X^{-\al, \al, T}_{p, 2}}.
\end{align}

\noi
Also, from \eqref{Isb} and \eqref{EMBED5}, we have
\begin{align} \label{WT2}
\|\mathcal{N}_1(u^N, u^N)\|_{X^{-\al, 1- \al, T}_{p, 2}} \leq I^N_{-\al, 1-\al}.
\end{align}

Recall that $\eta \Phi \in X^{-\al, \al}_{p, 2}$ a.s. 
from Proposition \ref{PROP:stoint}. 
Moreover, by defining $F^N_1$ and $F^N_2$ on $\T\times \R \times \Omega$ 
via their Fourier transforms:  
\begin{align} \label{F_N}
\ft{F^N_1}(n, \tau)  & = \jb{n}^{- \frac{1}{2}-\dl} (\s_0^{-\frac{3}{2}+\dl} + \s_0^{-\frac{1}{2}-\dl}) 
\int_0^T |\phi_n(r)|d\beta_n(r),
\ \text{ and } \\
\ft{F^N_2}(n, \tau) &  = \jb{n}^{- \frac{1}{2}-\dl} \chi_{\Omega(n)} (\tau - n^3)
 \s_0^{-\frac{1}{2}+\dl} \int_0^T |\phi_n(r)| d\beta_n(r)
\notag
\end{align}

\noi
for $|n| \leq N$, 
we have $F^N_1, F^N_2 \in L^2(\Omega; L^2_{x, t})$ by Ito isometry and Lemma \ref{LEM:closetocurve},
which is basically shown in the estimate on (ii).
See \eqref{I11} and \eqref{I2}.
Then, from \eqref{N_1} and the estimates on (i) and (ii), we have 
\begin{align} \label{XYZ}
I^N_{-\al, 1-\al} \leq C_3\big( T^{\theta_2} \|u^N\|_{X^{-\al, \al, T}_{p, 2}}  I^N_{-\al, 1-\al}  
+ T^{\theta_3} \|u^N\|_{X^{-\al, \al, T}_{p, 2}}^3 + T^{\theta_4} 
L^N_\omega  \|u^N\|_{X^{-\al, \al, T}_{p, 2}}\big), 
 \end{align}

\noi
where
$L^N_\omega = L^N( F^N_1, F^N_2)(\omega) :=
 \|F^N_1(\omega)\|_{L^2_{x, t}} + \|F^N_2(\omega)\|_{L^2_{x, t}}
< \infty $
a.s.
Moreover, $L^N_\omega$ is non-decreasing in $N$.
 
For fixed $R > 0$, 
choose $T>0$ small such that $C_3T^{\theta_2} R \leq \frac{1}{2}$.
Then, from \eqref{XYZ}, we have 
\begin{align} \label{WT4}
I^N_{-\al, 1-\al} \leq 2C_3 \big(
T^{\theta_3} \|u^N\|_{X^{-\al, \al, T}_{p, 2}}^3 + T^{\theta_4} L^N_\omega \|u^N\|_{X^{-\al, \al, T}_{p, 2}}\big), 
 \end{align}

\noi
for $\|u^N\|_{X^{-\al, \al, T}_{p, 2}} \leq R$.
From \eqref{GAMMA}$\sim$\eqref{WT4}, we have
\begin{align} \label{GAMMA1}
\| u^N \|_{X^{-\al, \al, T}_{p, 2}}
&  = \| \G^N u^N \|_{X^{-\al, \al, T}_{p, 2}} 
  \leq C_1 \|u^N_0\|_{\ft{b}^{-\al}_{p, \infty}}
+ \tfrac{1}{2} C_2 T^{\theta_1} \|u^N\|^2_{X^{-\al, \al, T}_{p, 2}}\notag \\
& \hphantom{XX} + 2C_3 \big(
T^{\theta_3} \|u^N\|_{X^{-\al, \al, T}_{p, 2}}^3 + T^{\theta_4} L^N_\omega \|u^N\|_{X^{-\al, \al, T}_{p, 2}}\big)
+ C_4 \|\eta \Phi^N(\omega)\|_{X^{-\al, \al}_{p, 2}}, 
\end{align}

\noi
and
\begin{align} \label{GAMMA2}
\| u^N - & u^M\|_{X^{-\al, \al, T}_{p, 2}}  
 = \| \G^N u^N - \G^M u^M\|_{X^{-\al, \al, T}_{p, 2}}  \notag \\
 & \leq  C_1 \|u^N_0- u^M_0\|_{\ft{b}^{-\al}_{p, \infty}}
+  \tfrac{1}{2} C_2 T^{\theta_1} (\|u^N\|_{X^{-\al, \al, T}_{p, 2}} 
+\|u^M\|_{X^{-\al, \al, T}_{p, 2}})\|u^N - u^M\|_{X^{-\al, \al, T}_{p, 2}}\notag  \\
& \hphantom{XX}
+ C_5 T^{\theta_3} 
\big(\|u^N\|_{X^{-\al, \al, T}_{p, 2}}^2 + \|u^M\|_{X^{-\al, \al, T}_{p, 2}}^2\big) 
 \|u^N -u^M \|_{X^{-\al, \al, T}_{p, 2}} \\
& \hphantom{XX}
+ 2C_3T^{\theta_4} L^N_\omega \|u^N -u^M \|_{X^{-\al, \al, T}_{p, 2}}
+ 2C_3T^{\theta_4} \wt{L}^{N, M}_\omega \|u^M \|_{X^{-\al, \al, T}_{p, 2}} \notag \\
&  \hphantom{XX}+ C_4 \|\eta (\Phi^N- \Phi^M)\|_{X^{-\al, \al}_{p, 2}}, 
\notag 
\end{align}

\noi
where
\begin{equation}\label{LL1}
\wt{L}^{N, M}_\omega 
:= 
 \|F^N_1-F^M_1\|_{L^2_{x, t}} + \|F^N_2-F^M_2\|_{L^2_{x, t}}.
\end{equation}

Note that in estimating the difference $\G^N u^N - \G^M u^M$ on $A_1$,  one needs to consider 
\begin{equation}\label{GAMMA3}
 \wt{I}_{-\al, 1-\al} := \| \mathcal{N}_1(u^N, u^N) - \mathcal{N}_1(u^M, u^M)\|_{-\al, 1-\al} 
\end{equation}

\noi
as in \cite{BO3}.
We can follow the argument on pp.135-136 in \cite{BO3},
except for $R_\al$ defined in \eqref{262}, yielding the third term on the right hand side of \eqref{GAMMA2}. 
As for $R_\al$, we can write 
\begin{align} \label{WTN1}
\mathcal{N}(\mathcal{N}(u,u), u)- \mathcal{N}(\mathcal{N}(v,v), v) 
= \mathcal{N}(\mathcal{N}(u+v,u-v),u)
+ \mathcal{N}(\mathcal{N}(v,v), u - v)
\end{align}

\noi as in (3.4) in \cite{BO3},
and then we can repeat the computation done for $R_\al$ in Estimate on (i), 
also yielding the third term on the right hand side of \eqref{GAMMA2}.

By definition of $u^N_0$, we have $2C_1\|u^N_0\|_{\ft{b}^{-\al}_{p, \infty}} 
\leq 2C_1\|u_0\|_{\ft{b}^{-\al}_{p, \infty}} + \frac{1}{2}$
for $N$ sufficiently large.
Also, since $\phi^N$ converges to $\phi$ in 
$L^p([0, 1]\times \Omega;\ft{b}^{-\al+}_{p, \infty})$, 
it follows from Corollary \ref{COR:stoint} and the estimate on (ii)
-see \eqref{I11}, \eqref{II2},  and \eqref{I2}-
that $\mathbb{E}[\|\eta (\Phi^N -\Phi)\|_{X^{-\al, \al}_{p, 2}}] $
and $\mathbb{E}[\wt{L}^{N, \infty}_\omega]$ defined in \eqref{LL1} converge to 0.
Hence, $\|\eta (\Phi^N -\Phi)\|_{X^{-\al, \al}_{p, 2}} + \wt{L}^{N, \infty}_\omega \to 0$ a.s. after selecting a subsequence
(which we still denote with the index $N$.) 
Then, by Egoroff's theorem, given $\eps > 0$, there exists a set $\Omega_\eps$ with 
$\mathbb{P}(\Omega^c_\eps) < 2^{-1}\eps$
such that 
 $\|\eta (\Phi^N -\Phi)\|_{X^{-\al, \al}_{p, 2}} 
 +\wt{L}^{N, \infty}_\omega \to  0$ uniformly in $\Omega_\eps$.
In particular, 
$2C_4\|\eta \Phi^N\|_{X^{-\al, \al}_{p, 2}} \leq 
2C_4\|\eta \Phi\|_{X^{-\al, \al}_{p, 2}} +\frac{1}{2}$
for large $N$ uniformly on $\Omega_\eps$.
In the following, we will work on $\Omega_\eps$.

Now, let $R_\omega = 2(C_1 \|u_0\|_{\ft{b}^{-\al}_{p, \infty}}
+ C_4 \|\eta \Phi(\omega)\|_{X^{-\al, \al}_{p, 2}}) + 1$, 
and define the stopping time $T_\omega$ by
\begin{align} \label{WTX3}
 T_\omega = \inf \{ T> 0: \max( C_3 T^{\theta_2}R_\omega,
P_1(T, R_\omega, \omega), P_2(T, R_\omega, \omega)
\geq \tfrac{1}{2}\},
\end{align}

\noi
where
\[\begin{cases}
P_1(T, R_\omega, \omega) =
\tfrac{1}{2}C_2 T^{\theta_1} R_\omega 
+ 2 C_3 T^{\theta_3} (R_\omega )^2
+ 2 C_3 T^{\theta_4 }L_\omega, & \text{ from \eqref{GAMMA1}}\\
P_2(T, R_\omega, \omega)=
C_2 T^{\theta_1} R_\omega
+ 2 C_5  T^{\theta_3} (R_\omega)^2
+ 2C_3  T^{\theta_4}L_\omega,  & \text{ from \eqref{GAMMA2}}.
\end{cases}
\]

\noi
The first condition in the definition of $T_\omega$ guarantees \eqref{WT4},
and hence \eqref{GAMMA1} and \eqref{GAMMA2},
for $\|u^N\|_{X^{-\al, \al, T}_{p, 2}} \leq R_\omega$.
The second condition along with \eqref{GAMMA1} indeed guarantees that 
\begin{equation} \label{WTX1}
\|u^N\|_{X^{-\al, \al, T}_{p, 2}} \leq R_\omega
\end{equation}

\noi for $T \leq T_\omega$
from the following observation.
Since we have the temporal regularity $b = \al < \frac{1}{2}$, 
we have 
$\|u^N\|_{X^{-\al, \al, T}_{p, 2}}
= \|\chi_{[0, T]}u^N\|_{X^{-\al, \al}_{p, 2}}$,
where $\chi_{[0, T]}$ denotes the characteristic function of the time interval $[0, T]$.
See Bourgain \cite{BO4}.
Hence, $\|u^N\|_{X^{-\al, \al, T}_{p, 2}}$ is continuous in $T$
since 
\begin{equation} \label{WTX2}
\big|\|u^N\|_{X^{-\al, \al, T+\dl}_{p, 2}} -\|u^N\|_{X^{-\al, \al, T}_{p, 2}}\big|
\leq \|u^N\|_{X^{-\al, \al}_{p, 2}[T, T+\dl]}
\lesssim \dl^\theta \|u^N\|_{X^{0-, \frac{1}{2}}[T, T+\dl]}
\end{equation}

\noi
for sufficiently small $\dl>0$.
Note that the last term in \eqref{WTX2} is finite for small $\dl$
since the local-in-time solutions constructed in \cite{DDT1} 
are controlled in this norm (indeed in a stronger norm
adapted to the Besov space $B_{2, 1}^{0-}$.) 
Then, \eqref{WTX1} follows from \eqref{GAMMA1}, 
the second condition in \eqref{WTX3}, and the continuity of the norm in $T$
since \eqref{WTX1} clearly holds at $T = 0$.

From \eqref{GAMMA2} along with the third condition in \eqref{WTX3}, 
we have
\begin{align} \label{WTX4}
\| u^N -  u^M\|_{X^{-\al, \al, T_\omega}_{p, 2}}  
   \leq  \ & 2C_1 \|u^N_0- u^M_0\|_{\ft{b}^{-\al}_{p, \infty}}
+ 4C_3T^{\theta_4} R_\omega \wt{L}^{N, M}_\omega  \\
 & + 2C_4 \|\eta (\Phi^N- \Phi^M)\|_{X^{-\al, \al}_{p, 2}}. \notag 
\end{align}

\noi
The right hand side of \eqref{WTX4} goes to 0 as $N, M \to \infty$
since $u^N_0$ is Cauchy in $\ft{b}^{-\al}_{p, \infty}$
and 
$\|\eta (\Phi^N- \Phi^M)\|_{X^{-\al, \al}_{p, 2}}+ \wt{L}^{N, M}_\omega \to 0$  on $\Omega_\eps$ uniformly in $N, M$.
Let $u$ denote the limit in $X^{-\al, \al, T_\omega}_{p, 2}$.

In the following, we give a brief discussion  to show that the limit $u$ is a solution to \eqref{duhamel1}.
Clearly, $S(t) u_0^N$ and $\eta \Phi^N$ converge
to $S(t) u_0$ and $\eta \Phi$ in $X^{-\al, \al, T_\omega}_{p, 2}$.
It follows from \eqref{Np0} that $\mathcal{N}_0(u^N, u^N)$ converges
$\mathcal{N}_0(u, u)$ in $X^{-\al, \al, T_\omega}_{p, 2}$.
In view of \eqref{WT4}, \eqref{GAMMA2}, and \eqref{GAMMA3}, 
we see that $\mathcal{N}_j(u^N, u^N)$ is Cauchy in 
a slightly stronger space $X^{-\al, 1- \al, T_\omega}_{p, 2}$, $j = 1, 2$. 
Let $v_j$ denote the corresponding limit.
Thus, from \eqref{GAMMA}, we have
\begin{equation} \label{UU1}
u = S(t)u_0 -\tfrac{1}{2} \mathcal{N}_0(u, u)
-\tfrac{1}{2} (v_1 +v_2) + \eta \Phi.
\end{equation}

\noi
Now, we need
 to show that $\mathcal{N}_j(u^N, u^N)$ indeed converges to $\mathcal{N}_j(u, u)$,
$j = 1, 2$.
By symmetry, we only consider 
$\mathcal{N}_1(u, u) - \mathcal{N}_1(u^N, u^N)$.
As before, we substitute \eqref{UU1} (and \eqref{GAMMA})
in the first factor $u$ (and $u^N$) of $\mathcal{N}_1(\cdot, \cdot)$,
respectively.
There are three contributions to consider.

\smallskip
\noi
$\bullet$ {\bf (A)} Contribution from the stochastic terms: 
We have
\begin{align} \label{UU2}
\mathcal{N}_1(\eta\Phi, u) - \mathcal{N}_1(\eta\Phi^N, u^N)
=\mathcal{N}_1(\eta(\Phi - \Phi^N), u)
+\mathcal{N}_1(\eta\Phi^N, u -u^N).
\end{align}

\noi
From Estimate on (ii), we have
\[\|\eqref{UU2}\|_{ X^{-\al, \al, T_\omega}_{p, 2}}
\lesssim 
 \wt{L}^{N, \infty}_\omega \|u \|_{X^{-\al, \al, T_\omega}_{p, 2}}
+  L^N_\omega \|u^N -u \|_{X^{-\al, \al, T_\omega}_{p, 2}}
\to 0
\]

\noi
as $N\to \infty$, since $\|u \|_{X^{-\al, \al, T}_{p, 2}} \leq R_\omega$
and $\wt{L}^{N, \infty}_\omega \to 0$ uniformly on $\Omega_\eps$.

\smallskip
\noi
$\bullet$ {\bf (B)} Contribution from $\mathcal{N}_0(\cdot, \cdot)$:
In this case, we consider
\begin{equation} \label{UU3}
\mathcal{N}_1(\mathcal{N}_0(u, u), u) -\mathcal{N}_1(\mathcal{N}_0(u^N, u^N), u^N).
\end{equation}

\noi
Note that we have $\s_1 \geq \s_0, \s_2, \s_3, \s_4$
from the definition of $\mathcal{N}_1(\cdot, \cdot)$ and $\mathcal{N}_0(\cdot, \cdot)$.
See \eqref{eq:u_1} and \eqref{eq:I2}.
Indeed, we have $\s_1 \geq \s_0, \s_2$
since we are on $A_1$ defined in \eqref{AJJ},
and also $\s_1 \geq \s_3, \s_4$ since we are on the support of $\mathcal{N}_0(\cdot, \cdot)$
in the first factor of $\mathcal{N}_1(\cdot, \cdot)$.
Once again, one can easily follow the argument on p.136 in \cite{BO3}
and show
\begin{equation*}
\|\eqref{UU3}\|_{X^{-\al, \al, T_\omega}_{p, 2}}
\lesssim 
\big(\|u^N\|_{X^{-\al, \al, T_\omega}_{p, 2}}^2 + \|u\|_{X^{-\al, \al, T_\omega}_{p, 2}}^2\big) 
 \|u^N -u \|_{X^{-\al, \al, T_\omega}_{p, 2}} \to 0.
\end{equation*}

\noi
In treating $R_\al - R_\al^N$
defined in \eqref{262}, 
one needs to proceed as before, using \eqref{WTN1}
and Estimate on (i).

\smallskip
\noi
$\bullet$ {\bf (C)} Contribution from $v_j$ and $\mathcal{N}_j(u^N, u^N)$,
$j = 1$ or $2$:
By symmetry, assume $j = 1$.
In this case, we have
$\s_1 \geq \s_0, \s_2$ but $\s_3 \geq \s_1, \s_4$.
i.e. we control \eqref{N_1} by 
the first term on the right hand side.
See (II.1) on p.126 in \cite{BO3}. 
Now, we need to estimate
\begin{align} \label{UU4}
\mathcal{N}_1(v_1, u) - & \mathcal{N}_1(\mathcal{N}_1(u^N, u^N), u^N) \notag \\
& = \mathcal{N}_1(  v_1- \mathcal{N}_1(u^N, u^N), u) 
 +\mathcal{N}_1(\mathcal{N}_1(u^N, u^N), u - u^N)
=: \I + \II
.
\end{align}

\noi
Then, by proceeding as in \cite{BO3}
with  \eqref{EMBED5} and  \eqref{WT4}, we have
\begin{align*}
\|\, \II \, \|_{X^{-\al, 1- \al, T_\omega}_{p, 2}}
\lesssim 
I^N_{-\al, 1-\al}
\|u - u^N\|_{X^{-(1-\al), \al, T_\omega}}
\lesssim \|u - u^N\|_{X^{-\al, \al, T_\omega}_{p, 2}} \to 0.
\end{align*}

\noi
By proceeding as in (II.1)  in \cite{BO3}
with $|n_1|^\al$ replaced by $|n_1|^{1-\al}$,
followed by \eqref{EMBED5}, we have
\begin{align*}
\|\, \I \, \|_{X^{-\al, 1- \al, T_\omega}_{p, 2}}
& \lesssim 
\|v_1 - \mathcal{N}_1(u^N, u^N)\|_{-(1-\al), 1-\al}
\|u \|_{-(1-\al), \al} \\
& \lesssim \|v_1 - \mathcal{N}_1(u^N, u^N)\|_{X^{-\al, 1- \al, T_\omega}_{p, 2}}
\|u \|_{X^{-\al, \al, T_\omega}_{p, 2}} \to 0
\end{align*}

\noi
since $v_1 = \lim_{N\to \infty} \mathcal{N}_1(u^N, u^N)$ in $X^{-\al, 1- \al, T_\omega}_{p, 2}$
by definition.

\smallskip
Hence,  we have  $u = \G_{u_0} u$
for each $\omega \in \Omega_\eps$.
i.e. $u$ is a mild solution to \eqref{SKDV1} on $[0, T_\omega]$. 
Let $\Omega^{(1)} = \Omega_\eps$.
Now, we can recursively construct $\Omega^{(j+1)} \subset \Omega \setminus \bigcup_{k = 1}^j\Omega^{(k)}$
for $j = 1, 2, \cdots$
with 
$\mathbb{P}(\Omega \setminus \bigcup_{k = 1}^j\Omega^{(k)}) < 2^{-j} \eps$
such that 
$\|\eta (\Phi^N -\Phi)\|_{X^{-\al, \al}_{p, 2}} $
and 
 $\wt{L}^{N, \infty}_\omega$ converge to  $0$ uniformly in each $\Omega^{(j)}$.
Then, by repeating the argument, we can construct a solution $u$ on 
$\bigcup_{j = 1}^\infty\Omega^{(j)}$.
Note that 
$\mathbb{P}(\Omega \setminus \bigcup_{j = 1}^\infty\Omega^{(j)}) = 0$.

\smallskip

We have constructed a solution $u$ to \eqref{SKDV1} in $X^{-\al, \al, T_\omega}_{p, 2}$
with $u_0 \in \ft{b}^{-\al'}_{p, \infty}$.
Since $u$ is a solution, the a priori estimate \eqref{GAMMA1} holds
with the regularity $(s, b) = (-\al', \al')$ in place of $(-\al, \al)$.
Then, we easily see that $u \in X^{-\al', \al', T_\omega}_{p, 2}$,
by redefining $R_\omega$ and $T_\omega$ with this regularity.  
In the remaining of the paper, 
we  work only with the spatial regularity $s = -\al'$,
i.e. there is no approximating sequences any more.
Hence, for notational simplicity, we will use $-\al$ in place of $-\al'$
to denote the spatial regularity of the solution in the following.

We still need to take care of  several issues.
Note that the temporal regularity $b = \al = \frac{1}{2}-\dl$ of the solution $u$ is less than $\frac{1}{2}$.
In particular, we need to show that the solution $u$ is continuous 
from $[0, T_\omega]$ into $\ft{b}^{-\al}_{p, \infty}$.
We also need to show its uniqueness and continuous dependence on the initial data.

From Proposition \ref{PROP:stoconti}, 
$\eta \Phi \in C([0, T_\omega]; \ft{b}^{-\al}_{p, \infty}$) a.s.
Also, it follows from \eqref{WT0} with $b = \frac{1}{2}+\dl$, 
\eqref{WT2}, \eqref{WT4}, and symmetry on $\s_1$ and $\s_2$, 
that 
\[S(t)u_0 + \mathcal{N}_1(u, u) + \mathcal{N}_2(u, u)
\in X^{-\al, \frac{1}{2}+\dl, T_\omega}_{p, 2} \subset C([0, T_\omega]; \ft{b}^{-\al}_{p, \infty})\] 

\noi
a.s.
Now, we consider 
$\mathcal{N}_0(u, u)$, i.e. when $\s_0 = \MAX$.
Note that the contribution comes only from $\mathcal{M}_2(u, u)$ defined in \eqref{Duhamel}.
Let $\mathcal{N}_3(u, u)$
denotes the contribution of $\mathcal{N}_0(u, u)$
on $\{\max(\s_1, \s_2) \gtrsim \jb{n n_1 n_2}^\frac{1}{100}\}$,
and $\mathcal{N}_4(u, u) = \mathcal{N}_0(u, u) - \mathcal{N}_3(u, u)$.

\smallskip

\noi
$\bullet$ {\bf Case (a):} First, we consider $\mathcal{N}_3(u, u)$.
i.e. $\max(\s_1, \s_2) \gtrsim \jb{n n_1 n_2}^\frac{1}{100}$.
Say  $\s_1 \gtrsim \jb{n n_1 n_2}^\frac{1}{100}$.
Then, by Lemma \ref{LEM:linear2} and  \eqref{EMBED1},  we have
\begin{align*} 
\| \mathcal{N}_3(u, & u) \|_{X^{-\al, \frac{1}{2}+\dl, T_\omega}_{p, 2}}
 \lesssim \| \dx (u^2) \|_{X^{-\al, -\frac{1}{2}+\dl, T_\omega}_{p, 2}} 
\lesssim  \| \dx (u^2) \|_{X^{-\al, -\frac{1}{2} + \dl,T_\omega}}
\end{align*}

\noi
Then, by duality and \eqref{MAXMAX}, we have
\begin{align*}
& = \sup_{\|d\|_{L^2_{n, \tau}} = 1} 
 \sum_{\substack{n, n_1\\n = n_1 + n_2}} \intt_{\tau = \tau_1 + \tau_2} 
\frac{\jb{n}^{1-\al}d(n, \tau)}{\s_0^{\frac{1}{2}-\dl} } 
\prod_{j = 1}^2 \frac{\jb{n_j}^{1-\al}c(n_j, \tau_j)}{\s_j^\al}  d\tau d\tau_1\\
& \lesssim  \sup_{\|d\|_{L^2_{n, \tau}} = 1} 
\sum_{\substack{n, n_1\\n = n_1 + n_2}} \intt_{\tau = \tau_1 + \tau_2} 
d(n, \tau)
\frac{c(n_1, \tau_1)}{\s_1^{\al- 200\dl}}
\frac{c(n_2, \tau_2)}{\s_2^\al} d\tau d\tau_1
\end{align*}

\noi
where $c(n, \tau)$ is defined in \eqref{CN}.
Then, by $L^2_{x, t}, L^4_{x, t}, L^4_{x, t}$-H\"older inequality along 
with Lemma \ref{LEM:L4}, \eqref{CN},  and \eqref{EMBED5},
\begin{align*}
& \leq \|c\|^2_{L^2_{n, \tau}}
\leq  \|u\|^2_{X^{-(1-\al), \al}} 
\lesssim \|u\|^2_{X^{-\al, \al}_{p, 2}} < \infty.
\end{align*}

\noi
$\bullet$ {\bf Case (b):} Now, consider $\mathcal{N}_4(u, u)$.
i.e. $\max(\s_1, \s_2) \ll \jb{n n_1 n_2}^\frac{1}{100}$.
Note that it suffices to show that 
$\mathcal{N}_0(u, u) \in X^{-\al, 0, T_\omega}_{p, 1}$, 
since $X^{-\al, 0, T_\omega}_{p, 1}
\subset C([0, T_\omega]; \ft{b}^{-\al}_{p, \infty})$. 
Then, by Cauchy-Schwarz inequality, Lemma \ref{LEM:closetocurve} and duality, we have 
\begin{align*} 
\| \mathcal{N}_4(u, & u) \|_{X^{-\al, 0, T_\omega}_{p, 1}}
 \leq \| \dx (u^2) \|_{X^{-\al, -1, T_\omega}_{2, 1}} 
 \leq  
\big\| \|\jb{n}^{-\al} \jb{\tau - n^3}^{-1} \chi_{\Omega(n)}(\tau - n^3)
\ft{\dx (u^2)}(n, \tau)  \|_{L^1_{\tau}} \big\|_{L^2_n}  \\
& \leq \| \jb{\tau - n^3}^{-\frac{1}{2}+\dl} \chi_{\Omega(n)}(\tau - n^3) \|_{L^2_\tau}
\|\dx(u^2)\|_{-\al, -\frac{1}{2}-\dl}\\
& \lesssim  \sup_{\|d\|_{L^2_{n, \tau}} = 1} 
 \sum_{\substack{n, n_1\\n = n_1 + n_2}} \intt_{\tau = \tau_1 + \tau_2} 
\frac{\jb{n}^{1-\al}d(n, \tau)}{\s_0^{\frac{1}{2}+\dl} } 
\prod_{j = 1}^2 \frac{\jb{n_j}^{1-\al}c(n_j, \tau_j)}{\s_j^\al}  d\tau d\tau_1\\
& \lesssim  \sup_{\|d\|_{L^2_{n, \tau}} = 1} 
\sum_{\substack{n, n_1\\n = n_1 + n_2}} \intt_{\tau = \tau_1 + \tau_2} 
d(n, \tau)
\frac{c(n_1, \tau_1)}{\s_1^\al}
\frac{c(n_2, \tau_2)}{\s_2^\al} d\tau d\tau_1.
\end{align*}

\noi
The rest follows as before.
Hence, the solution $u$ is continuous from $[0, T_\omega]$
to $\ft{b}^{-\al}_{p, \infty}$.

\medskip

Lastly, we show the uniqueness and the continuous dependence of the solutions on the initial data.
Let $u$ and $v$ be the mild solutions of \eqref{SKDV1}
on $[0, T_\omega]$
with initial data $u_0$ and $v_0$ respectively.
i.e. 
\begin{equation}\label{GAMMAGAMMA}
u - v= \G_{u_0}u -\G_{v_0} v 
= S(t) (u_0-v_0) - \tfrac{1}{2}\big(\mathcal{N}(u, u)-\mathcal{N}(v, v)\big),
\end{equation}

\noi
where $\G$ is defined in \eqref{GAMMA}.
Moreover, assume that 
\begin{align} \label{Hirano1}
\|u_0\|_{\ft{b}^{-\al}_{p, \infty}}, 
\|v_0\|_{\ft{b}^{-\al}_{p, \infty}}, 
\|u\|_{X^{-\al, \al, T_\omega}_{p, 2}},
\|v\|_{X^{-\al, \al, T_\omega}_{p, 2}} \leq R.
\end{align}

\noi
Let $\wt{\mathcal{N}}_j (u, v) := -\tfrac{1}{2}\big(\mathcal{N}_j(u, u) -\mathcal{N}_j(v, v)\big)$
for $j = 1, \cdots, 4$.
First, note that 
$\| \wt{\mathcal{N}}_4(u,  v) \|_{X^{-\al, \eps, T_\omega}_{p, 1}}
\lesssim R^2 < \infty$ from (a slight variation of) Case (b), and
we have
\begin{align*} 
\|(u - v) - \wt{\mathcal{N}}_4(u,  v) \|_{X^{-\al, \eps, T_\omega}_{p, 1}}
\leq \Big\|S(t)(u_0 - v_0) + \sum_{j = 1}^3\wt{\mathcal{N}}_j(u, v)
\Big\|_{X^{-\al, \frac{1}{2}+\dl, T_\omega}_{p, 2}}
\lesssim C_1(R) <\infty
\end{align*}

\noi
by Cauchy-Schwarz inequality with $\eps < \dl$,  
followed by \eqref{WT0}, \eqref{WT2}, \eqref{WT4}, 
 Case (a), and \eqref{Hirano1}.
Then, by interpolation and Cauchy-Schwarz inequality, we have
\begin{align} \label{Hirano2}
\|u-v  \|_{C([0, T_\omega]; \ft{b}^{-\al}_{p, \infty})}
& \lesssim \|u-v  \|_{X^{-\al, 0, T_\omega}_{p, 1}} \notag 
\lesssim \|u-v  \|^\beta_{X^{-\al, -\dl-, T_\omega}_{p, 1}}
\|u-v  \|^{1-\beta}_{X^{-\al, \eps, T_\omega}_{p, 1}}\\
& \lesssim C_2(R) \|u-v  \|^\beta_{X^{-\al, \frac{1}{2} -\dl, T_\omega}_{p, 2}}
\end{align}

\noi
with $\beta = \frac{\eps}{\eps+\dl+} \in (0, 1)$.
From \eqref{WT0} and the nonlinear estimates (see  \eqref{WT1}, \eqref{WT4}, \eqref{GAMMA2},
\eqref{GAMMA3}), we have  
\begin{align*} 
\|u-v  \|_{X^{-\al, \frac{1}{2} -\dl, T_\omega}_{p, 2}}
\lesssim 
\|u_0-v_0\|_{\ft{b}^{-\al}_{p, \infty}}
+C_3(R) T^\theta_\omega\|u-v  \|_{X^{-\al, \frac{1}{2} -\dl, T_\omega}_{p, 2}}.
\end{align*}

\noi
Hence, for sufficiently small $T>0$, we have
\begin{align}\label{Hirano3}
\|u-v  \|_{X^{-\al, \frac{1}{2} -\dl, T_\omega}_{p, 2}}
\lesssim 
 \|u_0-v_0\|_{\ft{b}^{-\al}_{p, \infty}}.
\end{align}

\noi
Therefore, 
it follows from \eqref{Hirano2} and \eqref{Hirano3}
that the solution map is H\"older continuous with the bound
\[\|u-v  \|_{C([0, T_\omega]; \ft{b}^{-\al}_{p, \infty})}
\leq C_4(R)\|u_0-v_0\|^\beta_{\ft{b}^{-\al}_{p, \infty}}.\]

\noi
In particular, the solution is unique.  This completes the proof of Theorem \ref{THM1}.
\end{proof}

\end{document}